\title{Strongly barycentrically associative and preassociative functions}
\author{Jean-Luc Marichal}
\address{Mathematics Research Unit, FSTC, University of Luxembourg \\
6, rue Coudenhove-Kalergi, L-1359 Luxembourg, Luxembourg} \email{jean-luc.marichal[at]uni.lu}
\author{Bruno Teheux}
\address{Mathematics Research Unit, FSTC, University of Luxembourg \\
6, rue Coudenhove-Kalergi, L-1359 Luxembourg, Luxembourg} \email{bruno.teheux[at]uni.lu}
\date{December 22, 2015}
\begin{document}

\theoremstyle{plain}
\newtheorem{theorem}{Theorem}[section]
\newtheorem{lemma}[theorem]{Lemma}
\newtheorem{proposition}[theorem]{Proposition}
\newtheorem{corollary}[theorem]{Corollary}
\newtheorem{fact}[theorem]{Fact}
\newtheorem*{main}{Main Theorem}

\theoremstyle{definition}
\newtheorem{definition}[theorem]{Definition}
\newtheorem{example}[theorem]{Example}

\theoremstyle{remark}
\newtheorem*{conjecture}{Conjecture}
\newtheorem{remark}{Remark}
\newtheorem*{claim}{Claim}

\newcommand{\N}{\mathbb{N}}
\newcommand{\Q}{\mathbb{Q}}
\newcommand{\R}{\mathbb{R}}

\newcommand{\ran}{\mathrm{ran}}
\newcommand{\dom}{\mathrm{dom}}
\newcommand{\id}{\mathrm{id}}
\newcommand{\med}{\mathrm{med}}
\newcommand{\ofo}{\mathrm{ofo}}
\newcommand{\Ast}{\boldsymbol{\ast}}

\newcommand{\bfu}{\mathbf{u}}
\newcommand{\bfv}{\mathbf{v}}
\newcommand{\bfw}{\mathbf{w}}
\newcommand{\bfx}{\mathbf{x}}
\newcommand{\bfy}{\mathbf{y}}
\newcommand{\bfz}{\mathbf{z}}

\newcommand{\length}[1]{{\vert #1 \vert}}

\newcommand\restr[2]{{
  \left.\kern-\nulldelimiterspace 
  #1 
  \right|_{#2} 
  }}

\begin{abstract}
We study the property of strong barycentric associativity, a stronger version of barycentric associativity for functions with indefinite arities. We introduce and discuss the more general property of strong barycentric preassociativity, a
generalization of strong barycentric associativity which does not involve any composition of functions. We also provide a generalization of Kolmogoroff-Nagumo's characterization of the quasi-arithmetic mean functions to strongly barycentrically preassociative functions.
\end{abstract}

\keywords{Barycentric associativity, barycentric preassociativity, strong barycentric associativity, strong barycentric preassociativity, functional equation, quasi-arithmetic mean function, axiomatization}

\subjclass[2010]{39B72}

\maketitle

\section{Introduction}

Let $X$ and $Y$ be arbitrary nonempty sets. Throughout this paper we regard tuples $\bfx$ in $X^n$ as $n$-strings over $X$. We let $X^*=\bigcup_{n \geqslant 0} X^n$ be the set of all strings over $X$, with the convention that $X^0=\{\varepsilon\}$ (i.e., $\varepsilon$ denotes the unique $0$-string on $X$). We denote the elements of $X^*$ by bold roman letters $\bfx$, $\bfy$, $\bfz$, etc. If we want to stress that such an element is a letter of $X$, we use non-bold italic letters $x$, $y$, $z$, etc. The \emph{length} of a string $\bfx$ is denoted by $|\bfx|$. For instance, $|\varepsilon|=0$. We endow the set $X^*$ with the concatenation operation, for which $\varepsilon$ is the neutral element. For instance, if $\bfx\in X^m$ and $y\in X$, then $\varepsilon\bfx y=\bfx y\in X^{m+1}$. Moreover, for every string $\bfx$ and every integer $n\geqslant 0$, the power $\bfx^n$ stands for the string obtained by concatenating $n$ copies of $\bfx$. In particular we have $\bfx^0=\varepsilon$.

As usual, a map $F\colon X^n\to Y$ is said to be an \emph{$n$-ary function} (an \emph{$n$-ary operation on $X$} if $Y=X$). Also, a map $F\colon X^*\to Y$ is said to be a \emph{variadic function} (a \emph{variadic operation on $X$} if $Y=X\cup\{\varepsilon\}$, a \emph{string function on $X$} if $Y=X^*$; see \cite{LehMarTeh}). For every variadic function $F\colon X^*\to Y$ and every integer $n\geqslant 0$, we denote by $F_n$ the \emph{$n$-ary part} $F|_{X^n}$ of $F$. Finally, a variadic function $F\colon X^n\to Y$ is said to be \emph{$\varepsilon$-standard} \cite{MarTeh2} if $\varepsilon\in Y$ and
$$
F(\bfx)=\varepsilon \quad\Leftrightarrow\quad \bfx=\varepsilon.
$$

Recall that a variadic operation $F\colon X^*\to X\cup\{\varepsilon\}$ is said to be \emph{barycentrically associative} (or \emph{B-associative} for short) \cite{MarTeh3} if it satisfies the equation
$$
F(\bfx\bfy\bfz) ~=~ F(\bfx F(\bfy)^{|\bfy|}\bfz),\qquad \bfx\bfy\bfz\in X^*.
$$
B-associativity (also known as \emph{decomposability} \cite{FodRou94,GraMarMesPap09}) was essentially introduced in 1909 by Schimmack~\cite{Sch09} and then used later by Kolmogoroff \cite{Kol30} and Nagumo \cite{Nag30} in a characterization of the class of quasi-arithmetic mean functions. For general background and historical notes on B-associativity, see \cite{MarTeh3}.

The following stronger version of B-associativity (also known as \emph{strong decomposability}) was introduced in \cite{Mar98,MarMatTou99}. For every $\bfx\in X^*$ and every $K\subseteq\{1,\ldots,|\bfx|\}$ (with $K=\varnothing$ if $\bfx=\varepsilon$), we denote by $\bfx|_K$ the string obtained from $\bfx$ by removing all the letters $x_i$ for which $i\in K^c=\{1,\ldots,|\bfx|\}\setminus K$. A variadic operation $F\colon X^*\to X\cup\{\varepsilon\}$ is said to be \emph{strongly barycentrically associative} (or \emph{strongly B-associative} for short) if for every $\bfx\in X^*$ and every $K\subseteq\{1,\ldots,|\bfx|\}$, we have $F(\bfx)=F(\bfx')$, where $\bfx'\in X^{|\bfx|}$ is defined by $\bfx'|_K=F(\bfx|_K)^{|\bfx|_K|}$ and $\bfx'|_{K^c}=\bfx|_{K^c}$.

For instance, if the operation $F\colon X^*\to X\cup\{\varepsilon\}$ is strongly B-associative, then it satisfies the condition
\begin{equation}\label{eq:sadf576}
F(\bfx\bfy\bfz) ~=~ F(F(\bfx\bfz)^{|\bfx|}\bfy F(\bfx\bfz)^{|\bfz|}),\qquad \bfx\bfy\bfz\in X^*.
\end{equation}

It is not difficult to see that any strongly B-associative operation $F\colon X^*\to X\cup\{\varepsilon\}$ is B-associative. The converse holds if $F$ is symmetric (i.e., $F_n$ is symmetric for every $n\geqslant 1$). However, it does not hold in general. For instance, the $\varepsilon$-standard operation $F\colon\R^*\to\R\cup\{\varepsilon\}$ defined as $F_n(\bfx)=\frac{1}{n}\sum_{i=1}^nx_i$ for every integer $n\geqslant 1$, is strongly B-associative and hence B-associative. However, the $\varepsilon$-standard operation $F\colon \R^*\to \R\cup\{\varepsilon\}$ defined by
$$
F_n(\bfx) ~=~ \sum_{i=1}^n \frac{2^{i-1}}{2^n-1}{\,}x_i{\,},\qquad n\geqslant 1,
$$
is B-associative but not strongly B-associative (see \cite[p.~37]{GraMarMesPap09}). It is also noteworthy that the strongly B-associative operations need not be symmetric. For instance the $\varepsilon$-standard operation $F\colon X^*\to X\cup\{\varepsilon\}$ defined by $F_n(\bfx)=x_1$ for every $n\geqslant 1$ is strongly B-associative, and similarly if $F_n(\bfx)=x_n$ for every $n\geqslant 1$.

Recall that a variadic function $F\colon X^*\to Y$ is said to be \emph{barycentrically preassociative} (or \emph{B-preassociative} for short) \cite{MarTeh3} if, for every $\bfx\bfy\bfy'\bfz\in X^*$, we have
$$
|\bfy| = |\bfy'| \quad\mbox{and}\quad F(\bfy) = F(\bfy') \quad\Rightarrow\quad F(\bfx\bfy\bfz) = F(\bfx\bfy'\bfz).
$$

It is easy to see that any B-associative operation $F\colon X^*\to X\cup\{\varepsilon\}$ is necessarily B-preassociative \cite{MarTeh3}. This observation motivates the introduction of the following property, which generalizes strong B-associativity.

\begin{definition}
We say that a variadic function $F\colon X^*\to Y$ is \emph{strongly barycentrically preassociative} (or \emph{strongly B-preassociative} for short) if for every $\bfx\in X^*$, every $\bfx'\in X^{|\bfx|}$, and every $K\subseteq\{1,\ldots,|\bfx|\}$, we have
$$
F(\bfx|_K) = F(\bfx'|_K) \quad\text{and}\quad \bfx'|_{K^c}=\bfx|_{K^c} \quad\Rightarrow\quad F(\bfx)=F(\bfx').
$$
\end{definition}

Just as strong B-associativity is a stronger version of B-associativity, strong B-preasso{\-}ciativity is a stronger version of B-preassociativity. However, these latter two properties are equivalent under the symmetry assumption. Also, since none of these properties involve any composition of functions, they allow us to consider a codomain Y that may differ
from the set $X\cup\{\varepsilon\}$. For instance, the length function $F\colon X^*\to\R$, defined as $F(\bfx) = |\bfx|$,
is strongly B-preassociative.

In Section 2 of this paper we investigate both strong B-associativity and strong B-preassociati{\-}vity. In particular, we provide equivalent formulations of these properties. For instance, we establish the surprising result that strong B-associativity is completely characterized by Eq.~\eqref{eq:sadf576}. We also provide factorization results for strongly B-preassociative functions. Finally, in Section 3 we recall a variant of Kolmogoroff-Nagumo's characterization of the class of quasi-arithmetic means based on the strong B-associativity property and we generalize this characterization to strongly B-preassociative functions.

The terminology used throughout this paper is the following. The domain and range of any function $f$ are denoted by $\dom(f)$ and $\ran(f)$, respectively. The identity operation on any nonempty set $E$ is denoted by $\id_E$. For every integer $n\geqslant 1$, the diagonal section $\delta_F\colon X\to Y$ of a function $F\colon X^n\to Y$ is defined as $\delta_F(x)=F(x^n)$.

\begin{remark}\label{rem:a8d5}
As already observed in \cite{MarTeh3}, if a B-associative operation $F\colon X^*\to X\cup\{\varepsilon\}$ is such that $\ran(F_n)\subseteq X$ for every $n\geqslant 1$, then the value of $F(\varepsilon)$ is unimportant in the sense that if we modify this value, then the resulting operation is still B-associative. Clearly, this observation also holds for strongly B-associative operations, B-preassociative functions, and strongly B-preassociative functions.
\end{remark}

\section{Strong barycentric associativity and preassociativity}

In this section we investigate both strong B-associativity and strong B-preassocia{\-}tivity properties. We start our investigation by showing that, surprisingly, strong B-associativity can be characterized simply by condition~\eqref{eq:sadf576}, thus providing a very concise definition of this (equational) property by means of a single equation.

\begin{proposition}\label{prop:3wordH}
A variadic operation $F\colon X^*\to X\cup\{\varepsilon\}$ is strongly B-associative if and only if it satisfies Eq.~\eqref{eq:sadf576}. Moreover, we may assume that $|\bfy|\leqslant 1$ in \eqref{eq:sadf576}.
\end{proposition}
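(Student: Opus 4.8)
The plan is to establish the two implications separately. The forward implication is essentially a specialization of the definition and requires almost no work; the converse, together with the refinement that $|\bfy|\leqslant 1$ suffices, is the real content and will be obtained from a single induction that only ever invokes the one-letter instances of Eq.~\eqref{eq:sadf576}.

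For the forward implication, assume $F$ is strongly B-associative and fix $\bfx\bfy\bfz\in X^*$. I would apply the defining condition to the string $\bfx\bfy\bfz$ with $K$ chosen to be the set of positions occupied by $\bfx$ together with those occupied by $\bfz$, so that $K^c$ is exactly the block of positions of $\bfy$. Then $(\bfx\bfy\bfz)|_K=\bfx\bfz$, and the string $\bfx'$ prescribed by the definition is precisely $F(\bfx\bfz)^{|\bfx|}\,\bfy\,F(\bfx\bfz)^{|\bfz|}$; hence $F(\bfx\bfy\bfz)=F(\bfx')$ is exactly Eq.~\eqref{eq:sadf576}. As $\bfy$ is arbitrary here, this proves Eq.~\eqref{eq:sadf576} in full, a fortiori for $|\bfy|\leqslant 1$.

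For the converse I would assume Eq.~\eqref{eq:sadf576} only for $|\bfy|\leqslant 1$ and prove strong B-associativity by induction on the number $|K^c|$ of protected positions. Two instances of the hypothesis do all the work: with $\bfy=\varepsilon$ it reads $F(\bfw)=F(F(\bfw)^{|\bfw|})$, and with $\bfy$ a single letter it reads $F(\bfu\,t\,\bfv)=F(F(\bfu\bfv)^{|\bfu|}\,t\,F(\bfu\bfv)^{|\bfv|})$ for all $\bfu,\bfv\in X^*$ and all letters $t$. The base case $|K^c|=0$ is the first identity (there $K$ is the full position set and $\bfx'=F(\bfx)^{|\bfx|}$). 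For the inductive step, fix $\bfx$ and $K$ with $K^c\neq\varnothing$, set $m=F(\bfx|_K)$, let $\bfx'$ be the string from the definition, and pick any protected position $p\in K^c$. The key move is to apply the one-letter instance to the letter $x_p$ in both $\bfx$ and $\bfx'$: writing $\bfw$ and $\bfw'$ for the strings obtained by deleting position $p$ from $\bfx$ and from $\bfx'$ respectively, this turns $F(\bfx)$ into $F(F(\bfw)^{p-1}\,x_p\,F(\bfw)^{|\bfx|-p})$ and $F(\bfx')$ into $F(F(\bfw')^{p-1}\,x_p\,F(\bfw')^{|\bfx|-p})$.

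It then suffices to show $F(\bfw)=F(\bfw')$, which is where the induction closes. Since $p\notin K$, deleting position $p$ leaves the $K$-letters intact, so $\bfw|_K=\bfx|_K$ and $F(\bfw|_K)=m$; moreover $\bfw'$ is exactly $\bfw$ with its $K$-letters overwritten by copies of $m$. Thus $(\bfw,\bfw')$ realizes the collapsing relation for a string whose number of protected positions is $|K^c|-1$, and the induction hypothesis gives $F(\bfw)=F(\bfw')$. Substituting this equality into the two expressions above yields $F(\bfx)=F(\bfx')$, completing the step. The main difficulty I anticipate is organizational rather than computational: one must induct on the number of protected positions---rather than on the length, which is not what decreases in the reduction---and check meticulously that deleting one protected position preserves both the value $m=F(\bfx|_K)$ and the fact that $\bfw'$ is the collapse of $\bfw$. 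Because every step of this argument uses only the $|\bfy|\leqslant 1$ cases, it simultaneously proves the ``Moreover'' clause, and combined with the forward implication it gives the equivalence of strong B-associativity, Eq.~\eqref{eq:sadf576}, and its restriction to $|\bfy|\leqslant 1$.
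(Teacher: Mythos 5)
Your proposal is correct and is essentially the paper's own proof: both arguments run an induction whose step picks a protected position $p\in K^c$, deletes it from $\bfx$ and $\bfx'$, applies the induction hypothesis to the shortened pair, and then invokes the one-letter instance of Eq.~\eqref{eq:sadf576} on both sides, with the $\bfy\bfz=\varepsilon$ instance (arity-wise range-idempotence) handling the fully collapsed case. The only difference is cosmetic: you induct on $|K^c|$ while the paper inducts on $|\bfx|$, and since the common reduction step decreases both parameters simultaneously, the two inductions are interchangeable.
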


\begin{proof}
The condition is clearly necessary. Let us show that it is also sufficient. Assuming that $F$ satisfies \eqref{eq:sadf576}, we have to prove that for every $\bfx\in X^*$ and every $K\subseteq\{1,\ldots,|\bfx|\}$, we have $F(\bfx)=F(\bfx')$, where $\bfx'\in X^{|\bfx|}$ is defined by $\bfx'|_K=F(\bfx|_K)^{|\bfx|_K|}$ and $\bfx'|_{K^c}=\bfx|_{K^c}$. Let us proceed by induction on $n=|\bfx|$. The result clearly holds for $n=0$. It also holds for $n=1$ since we have $F(x)=F(F(x))$ for any $x\in X$ (take $\bfx =x$ and $\bfy\bfz =\varepsilon$ in \eqref{eq:sadf576}). It also holds for $n=2$ since a similar argument gives $F(xy)=F(F(x)y)=F(xF(y))=F(F(xy))$ for any $x,y\in X$. Now, suppose that the result holds for any $n\geqslant 2$ and let us show that it holds for $n+1$. Let $\bfx\in X^{n+1}$, let $K\subseteq\{1,\ldots,n+1\}$, and let $\bfx'\in X^{n+1}$ be defined by $\bfx'|_K=F(\bfx|_K)^{|\bfx|_K|}$ and $\bfx'|_{K^c}=\bfx|_{K^c}$, where $K^c=\{1,\ldots,n+1\}\setminus K$. The result is trivial if $|K|=n+1$ since we have $F(\bfx)=F(F(\bfx)^{|\bfx|})$ (take $\bfy\bfz =\varepsilon$ in \eqref{eq:sadf576}). So assume that $|K|\leqslant n$ and take $k\in K^c$. Then there exist $\bfu\bfv,\bfu'\bfv'\in X^n$, with $|\bfu|=|\bfu'|$ and $|\bfv|=|\bfv'|$, such that $\bfx=\bfu x_k\bfv$ and $\bfx'=\bfu' x_k\bfv'$. We then have
$$
F(\bfx) ~=~ F(F(\bfu\bfv)^{|\bfu|}x_k F(\bfu\bfv)^{|\bfv|}) ~=~ F(F(\bfu'\bfv')^{|\bfu'|}x_k F(\bfu'\bfv')^{|\bfv'|}) ~=~ F(\bfx'),
$$
where the first and last equalities hold by \eqref{eq:sadf576} and the second equality by the induction hypothesis. This completes the proof of the proposition.
\end{proof}

The following proposition provides equivalent formulations of strong B-preassocia{\-}tivity.

\begin{proposition}\label{prop:asf8ssa}
Let $F\colon X^*\to Y$ be a variadic function. The following assertions are equivalent.
\begin{enumerate}
\item[(i)] $F$ is strongly B-preassociative.

\item[(ii)] For every $\bfx\bfx'\in X^*$ such that $|\bfx|=|\bfx'|$ and every $K\subseteq\{1,\ldots,|\bfx|\}$ we have
$$
F(\bfx|_K) = F(\bfx'|_K)\quad\mbox{and}\quad F(\bfx|_{K^c}) = F(\bfx'|_{K^c})\quad\Rightarrow\quad F(\bfx)=F(\bfx').
$$

\item[(iii)] For every $\bfx\bfx'\bfy\bfz\bfz'\in X^*$ we have
$$
|\bfx|=|\bfx'|,\quad|\bfz|=|\bfz'|,\quad\text{and}\quad F(\bfx\bfz) = F(\bfx'\bfz') \quad\Rightarrow\quad F(\bfx\bfy\bfz) = F(\bfx'\bfy\bfz').
$$
\end{enumerate}
Moreover, we may assume that $|\bfy|=1$ in assertion (iii).
\end{proposition}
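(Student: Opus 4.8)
The plan is to establish the cycle (i) $\Rightarrow$ (ii) $\Rightarrow$ (iii) $\Rightarrow$ (i), arranging the closing step so that it invokes only the special case $|\bfy|=1$ of (iii); this will simultaneously yield the main equivalence and the ``moreover'' clause, since (iii) trivially implies its $|\bfy|=1$ restriction while that restriction will be shown to imply (i). I note at the outset that (ii) $\Rightarrow$ (i) is immediate: the hypothesis $\bfx'|_{K^c}=\bfx|_{K^c}$ of strong B-preassociativity forces $F(\bfx|_{K^c})=F(\bfx'|_{K^c})$, so (i) is merely the instance of (ii) in which the $K^c$-parts coincide letterwise. The substance of the proposition therefore lies in recovering (ii) and (iii) from (i).

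For (i) $\Rightarrow$ (ii) I would pass through an intermediate string. Given $\bfx,\bfx'\in X^n$ and $K$ with $F(\bfx|_K)=F(\bfx'|_K)$ and $F(\bfx|_{K^c})=F(\bfx'|_{K^c})$, define $\bfx''\in X^n$ by $\bfx''|_K=\bfx'|_K$ and $\bfx''|_{K^c}=\bfx|_{K^c}$. Applying (i) to the pair $(\bfx,\bfx'')$ with the set $K$ (the $K^c$-parts agree letterwise, and $F(\bfx''|_K)=F(\bfx'|_K)=F(\bfx|_K)$) gives $F(\bfx)=F(\bfx'')$; applying (i) to the pair $(\bfx'',\bfx')$ with the set $K^c$ (the $K$-parts now agree letterwise, and $F(\bfx''|_{K^c})=F(\bfx|_{K^c})=F(\bfx'|_{K^c})$) gives $F(\bfx'')=F(\bfx')$, and chaining yields $F(\bfx)=F(\bfx')$. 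The step (ii) $\Rightarrow$ (iii) is then a matter of recognizing (iii) as an instance of (ii): for $\bfu=\bfx\bfy\bfz$ and $\bfu'=\bfx'\bfy\bfz'$, which have equal length by the length hypotheses, take $K$ to be the positions occupied by the prefix and suffix, so that $K^c$ is the block carrying the middle factor $\bfy$; then $\bfu|_K=\bfx\bfz$, $\bfu'|_K=\bfx'\bfz'$ and $\bfu|_{K^c}=\bfu'|_{K^c}=\bfy$, so both hypotheses of (ii) hold and (ii) delivers exactly $F(\bfu)=F(\bfu')$.

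The main obstacle is the closing implication (iii) $\Rightarrow$ (i), the difficulty being that $K$ need not be an interval, so the $K$- and $K^c$-positions are interleaved and cannot be isolated by a single application of the block statement (iii). I would dissolve this by inserting the fixed letters one at a time. Writing the common values at the positions of $K^c$ as $x_{j_1},\ldots,x_{j_r}$ and letting $\mathbf{s}_0,\ldots,\mathbf{s}_r$ (resp. $\mathbf{s}'_0,\ldots,\mathbf{s}'_r$) be the maximal $K$-segments of $\bfx$ (resp. $\bfx'$) lying between consecutive fixed positions, one has $\bfx=\mathbf{s}_0 x_{j_1}\mathbf{s}_1\cdots x_{j_r}\mathbf{s}_r$, likewise for $\bfx'$, with $|\mathbf{s}_i|=|\mathbf{s}'_i|$ for each $i$. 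The hypothesis $F(\bfx|_K)=F(\bfx'|_K)$ reads $F(\mathbf{s}_0\cdots\mathbf{s}_r)=F(\mathbf{s}'_0\cdots\mathbf{s}'_r)$, which is the base of an induction on $t=0,\ldots,r$ establishing $F(\mathbf{s}_0 x_{j_1}\mathbf{s}_1\cdots x_{j_t}\mathbf{s}_t\,\mathbf{s}_{t+1}\cdots\mathbf{s}_r)=F(\mathbf{s}'_0 x_{j_1}\mathbf{s}'_1\cdots x_{j_t}\mathbf{s}'_t\,\mathbf{s}'_{t+1}\cdots\mathbf{s}'_r)$; the inductive step inserts the single letter $x_{j_{t+1}}$ between the two displayed blocks by one application of (iii) with $|\bfy|=1$, the required length matchings holding because corresponding segments have equal lengths. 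At $t=r$ this is precisely $F(\bfx)=F(\bfx')$. Since only the $|\bfy|=1$ case of (iii) is ever used, this proves both (iii) $\Rightarrow$ (i) and the assertion that $|\bfy|=1$ may be assumed in (iii), closing the cycle.
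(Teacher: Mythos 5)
Your proposal is correct and takes essentially the same route as the paper: the paper likewise treats (i) $\Leftrightarrow$ (ii) $\Rightarrow$ (iii) as straightforward and proves (iii) $\Rightarrow$ (i) by repeatedly inserting the common letters one at a time via the $|\bfy|=1$ case of (iii), which is exactly your induction on segments. You merely spell out in full (the intermediate string $\bfx''$ for (i) $\Rightarrow$ (ii), and the explicit induction for (iii) $\Rightarrow$ (i)) what the paper dismisses as ``trivial or straightforward'' and illustrates only by an example.
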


\begin{proof}
(i) $\Leftrightarrow$ (ii) $\Rightarrow$ (iii). Trivial or straightforward.

(iii) $\Rightarrow$ (i). Follows from repeated applications of the stated condition. To illustrate, suppose that we have $F(x_1x_3)=F(x'_1x'_3)$ for some $x_1x_3x'_1x'_3\in X^4$. Then for any $x_2,x_4\in X$, we have $F(x_1x_2x_3)=F(x'_1x_2x'_3)$, and then $F(x_1x_2x_3x_4)=F(x'_1x_2x'_3x_4)$.
\end{proof}

Recall that a variadic operation $F\colon X^*\to X\cup\{\varepsilon\}$ is said to be \emph{arity-wise range-idempotent} \cite{MarTeh3} if $F(F(\bfx)^{|\bfx|})=F(\bfx)$ for every $\bfx\in X^*$. Clearly, any B-associative or strongly B-associative variadic operation is arity-wise range-idempotent. Actually, it can be shown \cite{MarTeh3} that if an operation $F\colon X^*\to X\cup\{\varepsilon\}$ is B-associative then it is both B-preassociative and arity-wise range-idempotent. The converse result holds whenever $\ran(F_n)\subseteq X$ for every $n\geqslant 1$ (note that this latter condition was wrongly omitted in \cite{MarTeh3}). The following proposition shows that this result still holds if we replace B-associativity and B-preassociativity by their strong versions.

\begin{proposition}\label{prop:sdf576}
If a variadic operation $F\colon X^*\to X\cup\{\varepsilon\}$ is strongly B-associative, then it is both strongly B-preassociative and arity-wise range-idempotent. The converse result holds whenever $\ran(F_n)\subseteq X$ for every $n\geqslant 1$.
\end{proposition}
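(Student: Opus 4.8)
The plan is to prove the two implications separately, and within the forward direction to handle its two conclusions one at a time. Assume first that $F$ is strongly B-associative. Arity-wise range-idempotence is immediate: applying the definition to an arbitrary $\bfx\in X^*$ with $K=\{1,\ldots,|\bfx|\}$ forces $K^c=\varnothing$ and $\bfx'=F(\bfx)^{|\bfx|}$, whence $F(\bfx)=F(\bfx')=F(F(\bfx)^{|\bfx|})$, which is exactly the required identity.

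For strong B-preassociativity I would fix $\bfx\in X^*$, $\bfx'\in X^{|\bfx|}$, and $K\subseteq\{1,\ldots,|\bfx|\}$ with $F(\bfx|_K)=F(\bfx'|_K)$ and $\bfx'|_{K^c}=\bfx|_{K^c}$, and then reduce both $\bfx$ and $\bfx'$ to a common canonical form. Concretely, let $\bfu\in X^{|\bfx|}$ be given by $\bfu|_K=F(\bfx|_K)^{|\bfx|_K|}$ and $\bfu|_{K^c}=\bfx|_{K^c}$, and let $\bfv\in X^{|\bfx|}$ be given by $\bfv|_K=F(\bfx'|_K)^{|\bfx'|_K|}$ and $\bfv|_{K^c}=\bfx'|_{K^c}$. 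Strong B-associativity applied to $\bfx$ and to $\bfx'$, both with the set $K$, yields $F(\bfx)=F(\bfu)$ and $F(\bfx')=F(\bfv)$. Since $F(\bfx|_K)=F(\bfx'|_K)$ and $|\bfx|_K|=|\bfx'|_K|$, the two strings agree on $K$, i.e.\ $\bfu|_K=\bfv|_K$, while $\bfu|_{K^c}=\bfx|_{K^c}=\bfx'|_{K^c}=\bfv|_{K^c}$; hence $\bfu=\bfv$ and $F(\bfx)=F(\bfu)=F(\bfv)=F(\bfx')$. The degenerate case $K=\varnothing$ is trivial since then $\bfx'=\bfx$.

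For the converse, assume $F$ is strongly B-preassociative and arity-wise range-idempotent with $\ran(F_n)\subseteq X$ for every $n\geqslant 1$, fix $\bfx\in X^*$ and $K$, and let $\bfx'$ be defined by $\bfx'|_K=F(\bfx|_K)^{|\bfx|_K|}$ and $\bfx'|_{K^c}=\bfx|_{K^c}$. The equality $\bfx'|_{K^c}=\bfx|_{K^c}$ holds by construction, so by strong B-preassociativity it suffices to verify the single equality $F(\bfx|_K)=F(\bfx'|_K)$. But $F(\bfx'|_K)=F(F(\bfx|_K)^{|\bfx|_K|})$ equals $F(\bfx|_K)$ by arity-wise range-idempotence, and the conclusion $F(\bfx)=F(\bfx')$ follows.

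The argument is largely bookkeeping, and the single delicate point is the role of the hypothesis $\ran(F_n)\subseteq X$ in the converse: for nonempty $K$ it guarantees that $F(\bfx|_K)$ is a letter of $X$, so that the power $F(\bfx|_K)^{|\bfx|_K|}$ is a well-formed string and $\bfx'$ genuinely lies in $X^{|\bfx|}$, which is what makes strong B-preassociativity applicable. I expect this well-definedness issue, together with isolating the one equality $F(\bfx|_K)=F(\bfx'|_K)$ that the hypothesis must supply, to be the main thing to get right; in the forward direction the key structural observation is simply that reducing $\bfx$ and $\bfx'$ to the same canonical form collapses the preassociativity conclusion to an identity of strings.
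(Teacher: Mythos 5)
Your proof is correct, but it takes a different route from the paper's. The paper proves both directions through its auxiliary machinery: the forward implication is dismissed as immediate from Proposition~\ref{prop:3wordH} (the characterization of strong B-associativity by the single equation~\eqref{eq:sadf576}), and the converse is proved by verifying~\eqref{eq:sadf576} directly — applying arity-wise range-idempotence to the string $\bfx\bfz$ to get $F(\bfx\bfz)=F(F(\bfx\bfz)^{|\bfx|}F(\bfx\bfz)^{|\bfz|})$ and then invoking the reformulation of strong B-preassociativity in Proposition~\ref{prop:asf8ssa}(iii). You instead work entirely with the original $K$-based definitions: in the forward direction you collapse $\bfx$ and $\bfx'$ to the common canonical form $\bfu=\bfv$ (which is the honest content behind the paper's ``trivially''), and in the converse you verify the defining condition of strong B-associativity for an arbitrary $K$, with arity-wise range-idempotence applied to $\bfx|_K$ supplying the one needed equality $F(\bfx|_K)=F(\bfx'|_K)$. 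Your version is self-contained — it needs neither Proposition~\ref{prop:3wordH} nor Proposition~\ref{prop:asf8ssa} — and it makes explicit the role of the hypothesis $\ran(F_n)\subseteq X$ (well-definedness of $\bfx'$ as a string over $X$), a point the paper leaves implicit; the paper's version is shorter given that its two propositions are already established, and has the side benefit of working with a single equation rather than with arbitrary subsets $K$. The only caveat, which applies equally to the paper's own treatment, is the degenerate situation where $F$ sends a nonempty string to $\varepsilon$: there the string $F(\bfx|_K)^{|\bfx|_K|}$ must be read under the paper's concatenation conventions, and both your canonical-form argument and the paper's use of~\eqref{eq:sadf576} survive under that reading, so this is a definitional subtlety rather than a gap in your proof.
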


\begin{proof}
The first result holds trivially by Proposition~\ref{prop:3wordH}. We now prove the converse result by using Proposition~\ref{prop:3wordH} again. Let $\bfx\bfy\bfz\in X^*$. If $\bfx\bfz=\varepsilon$, then \eqref{eq:sadf576} holds trivially. If $\bfx\bfz\neq\varepsilon$, then we have $F(\bfx\bfz)=F(F(\bfx\bfz)^{|\bfx|}F(\bfx\bfz)^{|\bfz|})$ by arity-wise range-idempotence and then \eqref{eq:sadf576} holds by Proposition~\ref{prop:asf8ssa}(iii).
\end{proof}

Various alternative formulations of B-associativity have been given in \cite{MarTeh3}. For instance, we can prove that an operation $F\colon X^*\to X\cup\{\varepsilon\}$ is B-associative if and only if we have $F(\bfx\bfy)=F(F(\bfx)^{|\bfx|}F(\bfy)^{|\bfy|})$ for every $\bfx\bfy\in X^*$. The following proposition provides similar formulations for strong B-associativity.

\begin{proposition}
Let $F\colon X^*\to X\cup\{\varepsilon\}$ be a variadic operation. The following assertions are equivalent.
\begin{enumerate}
\item[(i)] $F$ is strongly B-associative.

\item[(ii)] For every $\bfx\in X^*$ and every $K\subseteq\{1,\ldots,|\bfx|\}$, we have $F(\bfx)=F(\bfx')$, where $\bfx'\in X^{|\bfx|}$ is defined by $\bfx'|_K=F(\bfx|_K)^{|\bfx|_K|}$ and $\bfx'|_{K^c}=F(\bfx|_{K^c})^{|\bfx|_{K^c}|}$.

\item[(iii)] For every $\bfx\bfy\bfz\in X^*$, we have $F(\bfx\bfy\bfz)=F(F(\bfx\bfz)^{|\bfx|}F(\bfy)^{|\bfy|}F(\bfx\bfz)^{|\bfz|})$.
\end{enumerate}
Moreover, we may assume that $|\bfy|\leqslant 1$ in assertion (iii).
\end{proposition}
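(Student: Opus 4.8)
The plan is to establish the cycle (i)~$\Rightarrow$~(ii)~$\Rightarrow$~(iii)~$\Rightarrow$~(i), arranging the last implication so that it invokes only the instances of (iii) with $|\bfy|\leqslant 1$; since (iii) in full obviously entails this restricted form, the cycle will yield both the equivalence of (i)--(iii) and the ``moreover'' claim at once. As in the proof of Proposition~\ref{prop:3wordH}, I treat the folded values $F(\cdot)$ as letters (cf.\ Remark~\ref{rem:a8d5} for the bookkeeping of $\varepsilon$-valued folds).

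For (i)~$\Rightarrow$~(ii) I would fold the two blocks successively. Given $\bfx$ and $K$, strong B-associativity applied with the subset $K$ gives $F(\bfx)=F(\bfx_1)$, where $\bfx_1|_K=F(\bfx|_K)^{|\bfx|_K|}$ and $\bfx_1|_{K^c}=\bfx|_{K^c}$; applying it once more to $\bfx_1$ with the subset $K^c$ replaces the $K^c$-block by $F(\bfx_1|_{K^c})^{|\bfx|_{K^c}|}=F(\bfx|_{K^c})^{|\bfx|_{K^c}|}$ while leaving the $K$-block untouched, producing exactly the string $\bfx'$ of assertion (ii). The implication (ii)~$\Rightarrow$~(iii) is then the special case obtained by taking $\bfx\bfy\bfz$ for the string and letting $K$ be the set of positions occupied by $\bfy$: there $\bfx|_K=\bfy$ and $\bfx|_{K^c}=\bfx\bfz$ splits into a prefix of length $|\bfx|$ and a suffix of length $|\bfz|$, so $\bfx'=F(\bfx\bfz)^{|\bfx|}F(\bfy)^{|\bfy|}F(\bfx\bfz)^{|\bfz|}$.

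The heart of the argument is (iii)~$\Rightarrow$~(i), where the obstacle is to pass from (iii) back to Eq.~\eqref{eq:sadf576}, that is, to ``unfold'' the middle block $F(\bfy)^{|\bfy|}$ into $\bfy$. First, taking $\bfy=\varepsilon$ in (iii) yields $F(\bfw)=F(F(\bfw)^{|\bfw|})$ for every $\bfw$, i.e.\ arity-wise range-idempotence. Now fix $\bfx\bfy\bfz$, put $c=F(\bfx\bfz)$, and apply (iii) to the string $c^{|\bfx|}\bfy c^{|\bfz|}$; since $F(c^{|\bfx|}c^{|\bfz|})=F(c^{|\bfx\bfz|})=F(F(\bfx\bfz)^{|\bfx\bfz|})=c$ by range-idempotence, this gives $F(c^{|\bfx|}\bfy c^{|\bfz|})=F(c^{|\bfx|}F(\bfy)^{|\bfy|}c^{|\bfz|})$. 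But the right-hand side equals $F(\bfx\bfy\bfz)$ by a direct application of (iii), so $F(\bfx\bfy\bfz)=F(F(\bfx\bfz)^{|\bfx|}\bfy F(\bfx\bfz)^{|\bfz|})$, which is \eqref{eq:sadf576}; strong B-associativity then follows from Proposition~\ref{prop:3wordH}. Every instance of (iii) used here (including $\bfy=\varepsilon$) has $|\bfy|\leqslant 1$, and Proposition~\ref{prop:3wordH} likewise allows $|\bfy|\leqslant 1$, so the ``moreover'' claim drops out of the same computation.
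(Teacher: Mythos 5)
Your proof is correct and follows essentially the same route as the paper's: for the key implication (iii) $\Rightarrow$ (i) you first extract arity-wise range-idempotence, then apply (iii) to the folded string $F(\bfx\bfz)^{|\bfx|}\bfy\, F(\bfx\bfz)^{|\bfz|}$, identify $F$ of its outer blocks with $F(\bfx\bfz)$ via range-idempotence, chain back through (iii) applied to $\bfx\bfy\bfz$, and conclude by Proposition~\ref{prop:3wordH}, exactly as the paper does (the paper simply declares (i) $\Rightarrow$ (ii) $\Rightarrow$ (iii) trivial where you spell it out). The only real difference is that the paper splits the argument into the cases $F(\bfx\bfz)\in X$ and $F(\bfx\bfz)=\varepsilon$ while your computation treats them uniformly---which does remain valid in the degenerate case, since for $c=\varepsilon$ the powers $c^{|\bfx|}$ and $c^{|\bfz|}$ collapse to $\varepsilon$ and every stated equality still holds, although your pointer to Remark~\ref{rem:a8d5} (which concerns modifying the value $F(\varepsilon)$) is not the relevant justification for that bookkeeping.
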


\begin{proof}
(i) $\Rightarrow$ (ii) $\Rightarrow$ (iii). Trivial.

(iii) $\Rightarrow$ (i). First observe that $F$ is arity-wise range-idempotent (take $\bfy\bfz=\varepsilon$). Let $\bfx\bfy\bfz\in X^*$, with $|\bfy|\leqslant 1$. If $F(\bfx\bfz)=\varepsilon$, then we have
\begin{eqnarray*}
F(F(\bfx\bfz)^{|\bfx|}\bfy F(\bfx\bfz)^{|\bfz|}) &=& F(\bfy) ~=~ F(F(\bfy)^{|\bfy|}) ~=~ F(F(\bfx\bfz)^{|\bfx|}F(\bfy)^{|\bfy|}F(\bfx\bfz)^{|\bfz|})\\
&=& F(\bfx\bfy\bfz).
\end{eqnarray*}
Otherwise, if $F(\bfx\bfz)\in X$, then setting $\bfx'=F(\bfx\bfz)^{|\bfx|}$ and $\bfz'=F(\bfx\bfz)^{|\bfz|}$, we have
\begin{eqnarray*}
F(F(\bfx\bfz)^{|\bfx|}\bfy F(\bfx\bfz)^{|\bfz|}) &=& F(\bfx'\bfy\bfz') ~=~ F(F(\bfx'\bfz')^{|\bfx'|}F(\bfy)^{|\bfy|}F(\bfx'\bfz')^{|\bfz'|}) \\
& = & F(F(\bfx\bfz)^{|\bfx|}F(\bfy)^{|\bfy|}F(\bfx\bfz)^{|\bfz|}) ~=~ F(\bfx\bfy\bfz).
\end{eqnarray*}
In both cases, we have shown that $F$ is strongly B-associative by Proposition~\ref{prop:3wordH}.
\end{proof}

Proposition~\ref{prop:3wordH} states that an operation $F\colon X^*\to X\cup\{\varepsilon\}$ is strongly B-associative if and only if it satisfies the following two conditions:
\begin{enumerate}
\item[(a)] $F(\bfy)=F(F(\bfy)^{|\bfy|})$ for every $\bfy\in X^*$ (arity-wise range-idempotence),

\item[(b)] $F(\bfx y\bfz)=F(F(\bfx\bfz)^{|\bfx|}yF(\bfx\bfz)^{|\bfz|})$ for every $\bfx y\bfz\in X^*$.
\end{enumerate}
Interestingly, this equivalence also shows how a strongly B-associative $\varepsilon$-standard operation $F\colon X^*\to X\cup\{\varepsilon\}$ can be constructed by choosing first $F_1$, then $F_2$, and so forth. In fact, $F_1$ can be chosen arbitrarily provided that it satisfies $F_1\circ F_1=F_1$. Then, if $F_k$ is already chosen for some $k\geqslant 1$, then $F_{k+1}$ can be chosen arbitrarily from among the solutions of the following equations
\begin{eqnarray*}
&& \delta_{F_{k+1}}\circ F_{k+1} ~=~ F_{k+1}{\,},\label{eq:1rtz78z}\\
&& F_{k+1}(\bfx y\bfz) ~=~ F_{k+1}(F_k(\bfx\bfz)^{|\bfx|}yF_k(\bfx\bfz)^{|\bfz|}),\qquad \bfx y\bfz\in X^{k+1}.\label{eq:2rtz78z}
\end{eqnarray*}

In general, finding all the possible functions $F_{k+1}$ is not an easy task. However, we have the following two propositions, which hold for any strongly B-preassociative function and hence for any strongly B-associative operation. The proof of Proposition~\ref{prop:fd87f} is straightforward and thus omitted. Proposition~\ref{prop:s8da6f} was established in \cite{MarTeh3}.

\begin{proposition}\label{prop:fd87f}
Let $F\colon X^*\to Y$ be a B-preassociative (resp.\ strongly B-preassocia{\-}tive) function.
\begin{enumerate}
\item[(a)] If $F_k$ is symmetric for some $k\geqslant 2$, then so is $F_{k+1}$.

\item[(b)] If $F_k$ is constant for some $k\geqslant 1$, then so is $F_{k+1}$.

\item[(c)] For any sequence $(c_k)_{k\geqslant 1}$ in $Y$ and every $n\geqslant 1$, the function $G\colon X^*\to Y$ defined by $G_k=F_k$, if $k\leqslant n$, and $G_k=c_k$, if $k>n$, is B-preassociative (resp.\ strongly B-preassociative).
\end{enumerate}
\end{proposition}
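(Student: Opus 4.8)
The plan is to exploit the fact that a string of length $k+1$ has exactly two contiguous substrings of length $k$ --- its first $k$ letters and its last $k$ letters --- and to feed the hypothesis on $F_k$ into the (strong) B-preassociativity of $F$ through one of these two windows. I would first record the observation (noted after the definition in the introduction) that every strongly B-preassociative function is B-preassociative: given $\bfx\bfy\bfz$ and $\bfy'$ with $|\bfy|=|\bfy'|$ and $F(\bfy)=F(\bfy')$, apply the strong condition to $\bfx\bfy\bfz$ and $\bfx\bfy'\bfz$ with $K$ the block of positions occupied by $\bfy$. Hence in parts (a) and (b) the strongly B-preassociative cases follow at once from the B-preassociative cases, and I only need to treat the latter. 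Throughout, the one point requiring care is the contiguity constraint built into B-preassociativity, which dictates which of the two windows can be used.

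For (a), I would reduce symmetry of $F_{k+1}$ to invariance under adjacent transpositions, since these generate the symmetric group on $k+1$ symbols. Fixing $\bfx=x_1\cdots x_{k+1}$ and a pair $x_i,x_{i+1}$ to be swapped, I take the window to be the first $k$ letters when $i+1\leqslant k$ and the last $k$ letters when $i=k$; in either case the swapped window $\bfy'$ is a permutation of the original window $\bfy$, so symmetry of $F_k$ gives $F(\bfy)=F(\bfy')$, and B-preassociativity (with the single remaining letter as prefix or suffix) transfers this to equality of $F_{k+1}$ on the two full strings. The hypothesis $k\geqslant 2$ is precisely what ensures that one of the two windows brackets any chosen adjacent pair; for $k=1$ no length-$1$ window can contain both letters, matching the fact that the statement fails there.

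For (b), let $c$ denote the constant value of $F_k$ and take arbitrary $\bfx,\bfx'\in X^{k+1}$. I would move from $\bfx$ to $\bfx'$ in two B-preassociativity steps: first replace the first $k$ letters of $\bfx$ by those of $\bfx'$, using the first-$k$-letters window (both versions have $F_k$-value $c$) and $x_{k+1}$ as suffix; then replace the last letter using the last-$k$-letters window with $x_1'$ as prefix. Each step leaves $F_{k+1}$ unchanged, so $F(\bfx)=F(\bfx')$ and $F_{k+1}$ is constant.

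For (c), I would argue directly from the two definitions, splitting on the common length $m$ of the strings involved. If $m>n$, then $G$ is constant on $X^m$ and the required equality is immediate. If $m\leqslant n$, then every string occurring in the hypothesis and conclusion --- in particular the substrings $\bfy,\bfy'$ of the B-preassociative condition and the restrictions $\bfx|_K,\bfx'|_K$ of the strong condition --- has length at most $m\leqslant n$, so $G$ agrees with $F$ on all of them and the conclusion transfers verbatim from the corresponding property of $F$. The strongly B-preassociative case is handled on its own here rather than reduced to the plain case, but the length split is identical.
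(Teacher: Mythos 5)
Your proof is correct. The paper in fact omits the proof of this proposition entirely (``straightforward and thus omitted''), and your argument---reducing the strong case of (a) and (b) to the plain one, generating $S_{k+1}$ by adjacent transpositions and choosing whichever of the two length-$k$ windows brackets the swapped pair, the two-step replacement in (b), and the length split at $n$ in (c)---is exactly the routine verification the authors had in mind, with the $k\geqslant 2$ hypothesis correctly located as the condition making one of the two windows available.
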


\begin{proposition}[{\cite{MarTeh3}}]\label{prop:s8da6f}
Let $F\colon X^*\to Y$ be a B-preassociative function and let $k\geqslant 2$ be an integer. If the function $\bfy\in X^k\mapsto F_{k+2}(x\bfy z)$ is symmetric for every $xz\in X^2$, then so is the function $\bfy\in X^{k+1}\mapsto F_{k+3}(x\bfy z)$ for every $xz\in X^2$.
\end{proposition}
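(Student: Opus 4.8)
The plan is to reduce the desired symmetry to the adjacent transpositions that generate the symmetric group, and then to realize each such transposition by applying the hypothesis inside a suitably chosen contiguous substring of length $k+2$, transferring the resulting equality to the full string by means of B-preassociativity.

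First I would fix $x,z\in X$ and a string $\bfy=y_1\cdots y_{k+1}\in X^{k+1}$, and recall that the transpositions $\tau_i$ exchanging the coordinates in positions $i$ and $i+1$, for $i=1,\ldots,k$, generate the symmetric group on $k+1$ letters. It therefore suffices to prove that the value $F_{k+3}(x\bfy z)$ is left unchanged when we swap $y_i$ and $y_{i+1}$, for each $i\in\{1,\ldots,k\}$.

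The key observation is that the string $x\,y_1\cdots y_{k+1}\,z$ has length $k+3$, so it has exactly two contiguous substrings of length $k+2$: the string $y_1\cdots y_{k+1}z$ obtained by deleting the leading $x$, and the string $x\,y_1\cdots y_{k+1}$ obtained by deleting the trailing $z$. Applying the hypothesis to the first (with first letter $y_1$ and last letter $z$) shows that $F_{k+2}(y_1\cdots y_{k+1}z)$ is symmetric in its $k$ middle letters $y_2,\ldots,y_{k+1}$, which lets us swap $y_i$ and $y_{i+1}$ whenever $2\leqslant i\leqslant k$. Applying it to the second (with first letter $x$ and last letter $y_{k+1}$) shows that $F_{k+2}(x\,y_1\cdots y_{k+1})$ is symmetric in $y_1,\ldots,y_k$, which covers the swaps with $1\leqslant i\leqslant k-1$. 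This is exactly where the assumption $k\geqslant 2$ enters: the two index ranges $\{2,\ldots,k\}$ and $\{1,\ldots,k-1\}$ jointly cover all of $\{1,\ldots,k\}$ precisely when $k\geqslant 2$.

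Finally, to pass from each of these substring identities to the full string, I would invoke B-preassociativity. For $2\leqslant i\leqslant k$, the strings $y_1\cdots y_i y_{i+1}\cdots y_{k+1}z$ and $y_1\cdots y_{i+1}y_i\cdots y_{k+1}z$ have equal length and, by the previous paragraph, equal value under $F$; prefixing both by $x$ (with empty suffix) then yields $F_{k+3}(xy_1\cdots y_i y_{i+1}\cdots y_{k+1}z)=F_{k+3}(xy_1\cdots y_{i+1}y_i\cdots y_{k+1}z)$. For $1\leqslant i\leqslant k-1$ I would argue symmetrically, taking $x\,y_1\cdots y_{k+1}$ and its swap as the equal-valued blocks and appending $z$ as the suffix. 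I do not anticipate a genuine obstacle here; the only delicate point is the bookkeeping ensuring that the two substrings together realize every adjacent transposition, which, as noted, is exactly what $k\geqslant 2$ guarantees.
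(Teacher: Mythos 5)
The paper itself states this proposition without proof, merely citing \cite{MarTeh3}, so there is no internal argument to compare yours against; judged on its own merits, your proof is correct and complete. The three ingredients all check out: (1) applying the hypothesis to the window $y_1\cdots y_{k+1}z$ with $(y_1,z)$ in the roles of the fixed endpoints makes $F_{k+2}$ symmetric in $y_2,\ldots,y_{k+1}$, and applying it to $xy_1\cdots y_{k+1}$ with $(x,y_{k+1})$ as endpoints makes it symmetric in $y_1,\ldots,y_k$, so the adjacent swaps realized this way have index ranges $\{2,\ldots,k\}$ and $\{1,\ldots,k-1\}$, which cover $\{1,\ldots,k\}$ exactly when $k\geqslant 2$ --- you correctly isolate this as the only place the hypothesis $k\geqslant 2$ is used; (2) each such equality of $F$-values on length-$(k+2)$ strings lifts to the full length-$(k+3)$ string by B-preassociativity, with prefix $x$ and empty suffix in the first case and empty prefix and suffix $z$ in the second, which is a literal instance of the definition; (3) since the invariance under each adjacent transposition holds at \emph{every} argument string $\bfy\in X^{k+1}$, and adjacent transpositions generate the symmetric group on $k+1$ letters, the map $\bfy\mapsto F_{k+3}(x\bfy z)$ is invariant under all permutations. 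This is the natural argument for the statement (and almost certainly the one in \cite{MarTeh3}): symmetry on shorter windows transferred outward by B-preassociativity. No gap.
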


Proposition~\ref{prop:s8da6f} motivates the question of finding necessary and sufficient conditions on a (strongly) B-preassociative function $F\colon X^*\to Y$ for the following condition to hold:
\begin{equation}\label{eq:75wtbd}
F(abcd) ~=~ F(acbd){\,},\qquad a,b,c,d\in X.
\end{equation}
Not all strongly B-preassociative functions satisfy condition \eqref{eq:75wtbd}. To give a very simple example, just consider the identity function $F=\id_{X^*}$.

However, one can show that condition \eqref{eq:75wtbd} holds for all strongly B-associative operations.

\begin{lemma}\label{lemma:8sf6}
Any strongly B-associative operation $F\colon X^*\to X\cup\{\varepsilon\}$ satisfies condition \eqref{eq:75wtbd}.
\end{lemma}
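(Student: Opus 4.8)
The plan is to use Proposition~\ref{prop:3wordH} to rewrite everything in terms of Eq.~\eqref{eq:sadf576}, reduce the assertion to an adjacent transposition of a single interior letter against a uniform background, and then dispatch that residual case.

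First I would invoke Proposition~\ref{prop:3wordH}, so that $F$ satisfies \eqref{eq:sadf576}. Applying \eqref{eq:sadf576} with $\bfx=a$, $\bfy=bc$, $\bfz=d$, and then with $\bfx=a$, $\bfy=cb$, $\bfz=d$, I get $F(abcd)=F(p\,bc\,p)$ and $F(acbd)=F(p\,cb\,p)$, where $p=F(ad)$. Thus it suffices to prove the \emph{equal-guard} identity $F(pbcp)=F(pcbp)$, in which the two outer letters now coincide. Isolating $b$ on each side (apply \eqref{eq:sadf576} with $\bfx=p$, $\bfy=b$, $\bfz=cp$ on the left, and with $\bfx=pc$, $\bfy=b$, $\bfz=p$ on the right) and writing $r=F(pcp)$, this reduces further to $F(rbrr)=F(rrbr)$, i.e.\ the invariance of $F$ under transposing the interior letter $b$ with an adjacent copy of the background letter $r$.

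The hard part is exactly this last equality, and I expect it to be the main obstacle. The tempting move---reisolating $b$ by \eqref{eq:sadf576}---is useless: since $r=F(pcp)$ lies in $\ran(F_3)$, arity-wise range-idempotence gives $F(r^3)=r$, so collapsing the three surrounding copies of $r$ merely reproduces the same string, and every ``balanced'' application of \eqref{eq:sadf576} is circular in the same way. To escape this fixed point I would collapse only \emph{two} background copies at a time: setting $r_2=F(r^2)$, which need not equal $r$, produces genuinely fresh letters and breaks the uniformity, after which a short chain of such non-balanced collapses---or an auxiliary induction tracking the number of distinct letters---should force $F(rbrr)$ and $F(rrbr)$ to a common value. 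The delicate bookkeeping of these unbalanced collapses is where the real work lies; once \eqref{eq:75wtbd} is secured for $n=4$, it serves as the base case from which Proposition~\ref{prop:s8da6f} propagates interior symmetry to all higher arities.
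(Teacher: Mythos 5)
Your reduction steps are all correct as far as they go, but the proof has a genuine gap: the residual identity $F(rbrr)=F(rrbr)$, which you yourself flag as ``the hard part'', is never actually proved. Observe that this identity is just another instance of \eqref{eq:75wtbd} (take $a=c=d=r$), so you have reduced the general statement to a special case of the same statement. The sketch offered for that special case (``a short chain of such non-balanced collapses \dots should force'' the equality) is not an argument, and the natural attempts really are circular: collapsing the two outermost letters of $F(rbrr)$ and of $F(rrbr)$ via \eqref{eq:sadf576} yields $F(r_2brr_2)$ and $F(r_2rbr_2)$ with $r_2=F(r^2)$, which is exactly the configuration $F(pbcp)$ versus $F(pcbp)$ that you started from, with no quantity that decreases; nothing in the proposal shows that the ``unbalanced'' collapses fare any better.

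The root cause is your opening move of rewriting everything in terms of Eq.~\eqref{eq:sadf576}. That equation only permits collapsing index sets $K$ consisting of a prefix together with a suffix, whereas the hypothesis of the lemma is full strong B-associativity, which allows an \emph{arbitrary} $K$ --- in particular the interleaved sets $\{1,3\}$ and $\{2,4\}$, which are not of prefix-plus-suffix form. This is precisely what the paper's four-line proof exploits: setting $x=F(ab)$ and $y=F(cd)$, collapsing $\{1,2\}$ and then $\{3,4\}$ gives $F(abcd)=F(xxyy)$; collapsing $\{1,3\}$ and then $\{2,4\}$ gives $F(acbd)=F(xyxy)$; and both $F(xxyy)$ and $F(xyxy)$ equal $F(F(xy)^4)$ (collapse $\{1,3\},\{2,4\}$ in the first string and $\{1,2\},\{3,4\}$ in the second). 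The same trick closes your residual case instantly: with $u=F(rb)$ and $v=F(rr)$ one gets $F(rbrr)=F(uuvv)=F(F(uv)^4)=F(uvuv)=F(rrbr)$. So the fix is not more bookkeeping of unbalanced collapses, but simply using the non-contiguous collapses that the hypothesis already grants you; restricting to \eqref{eq:sadf576} forfeits them, and recovering them from \eqref{eq:sadf576} requires rerunning the induction of Proposition~\ref{prop:3wordH}, not a short ad hoc chain.
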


\begin{proof}
Let $a,b,c,d\in X$ and set $x=F(ab)$ and $y=F(cd)$. Repeated applications of strong B-associativity give
$$
F(abcd) ~=~ F(xxyy) ~=~ F(F(xy)^4) ~=~ F(xyxy) ~=~ F(acbd),
$$
which shows that condition \eqref{eq:75wtbd} holds.
\end{proof}

From Proposition~\ref{prop:s8da6f} and Lemma~\ref{lemma:8sf6} we immediately derive the following corollary, which states that, for every strongly B-associative operation $F\colon X^*\to X\cup\{\varepsilon\}$ and every integer $n\geqslant 4$, the $n$-ary part $F_n$ is invariant under any permutation of its arguments except the first and the last ones.

\begin{corollary}\label{cor:s8da5}
If $F\colon X^*\to X\cup\{\varepsilon\}$ is strongly B-associative, then, for every integer $k\geqslant 1$ and every $xz\in X^2$, the function $\bfy\in X^k\mapsto F_{k+2}(x\bfy z)$ is symmetric.
\end{corollary}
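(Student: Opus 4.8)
The goal is Corollary~\ref{cor:s8da5}: for a strongly B-associative operation $F$, the map $\bfy \in X^k \mapsto F_{k+2}(x\bfy z)$ is symmetric for every $k \geqslant 1$ and every $xz \in X^2$. The plan is to combine the base case supplied by Lemma~\ref{lemma:8sf6} with the inductive step supplied by Proposition~\ref{prop:s8da6f}, so the entire argument reduces to matching the hypotheses of those two results.

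First I would observe that Proposition~\ref{prop:s8da6f} is precisely an induction step: it says that if $\bfy \in X^k \mapsto F_{k+2}(x\bfy z)$ is symmetric for all $xz \in X^2$, then so is $\bfy \in X^{k+1} \mapsto F_{k+3}(x\bfy z)$, and it applies to any B-preassociative $F$ (in particular to our strongly B-associative $F$, which is B-preassociative by the remarks preceding Proposition~\ref{prop:sdf576}). So I would set up an induction on $k$, where the statement to be propagated is ``$\bfy \in X^k \mapsto F_{k+2}(x\bfy z)$ is symmetric for every $xz \in X^2$.'' The induction step for $k \geqslant 2$ is then immediate from Proposition~\ref{prop:s8da6f}.

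The work therefore concentrates on the base cases $k=1$ and $k=2$, which must be verified by hand since Proposition~\ref{prop:s8da6f} only starts its induction at $k \geqslant 2$. For $k=1$, symmetry of a one-letter string is vacuous, so nothing needs checking. For $k=2$, I must show that $\bfy \in X^2 \mapsto F_4(x\bfy z)$ is symmetric for every $xz \in X^2$, i.e. that $F(xbcz) = F(xcbz)$ for all $b,c \in X$. This is exactly condition~\eqref{eq:75wtbd} with $a,d$ renamed to $x,z$, and it is supplied wholesale by Lemma~\ref{lemma:8sf6}. Thus both base cases are handled.

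The only point requiring a moment of care—and what I expect to be the mild obstacle—is bookkeeping the index shift between the two ingredients: Proposition~\ref{prop:s8da6f} advances the symmetric block from length $k$ (inside $F_{k+2}$) to length $k+1$ (inside $F_{k+3}$), while Lemma~\ref{lemma:8sf6} pins down the block of length $2$ inside $F_4$. Once I confirm that the lemma supplies exactly the $k=2$ instance of the induction hypothesis, the induction runs cleanly upward to all $k \geqslant 2$, and together with the vacuous $k=1$ case this covers every $k \geqslant 1$, completing the proof.
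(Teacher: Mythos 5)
Your proof is correct and is exactly the argument the paper intends when it says the corollary follows ``immediately'' from Proposition~\ref{prop:s8da6f} and Lemma~\ref{lemma:8sf6}: the lemma supplies the $k=2$ base case, the proposition supplies the induction step for $k\geqslant 2$, and $k=1$ is vacuous. No gaps; the index bookkeeping you flag is handled correctly.
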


In \cite{MarTeh3}, the authors show how new B-preassociative functions can be constructed from given B-preassociative functions by compositions with unary maps. These results still hold for strongly B-preassociative functions. The proofs are straightforward.

\begin{proposition}[Right composition]
If $F\colon X^*\to Y$ is strongly B-preassociative then, for every function $g\colon X'\to X$, any function $H\colon X'^*\to Y$ such that $H_n=F_n\circ(g,\ldots,g)$ for every $n\geqslant 1$ is strongly B-preassociative.
\end{proposition}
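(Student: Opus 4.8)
The plan is to reduce the strong B-preassociativity of $H$ to that of $F$ by transporting all the strings involved through the letterwise application of $g$. For a string $\bfu=(u_1,\ldots,u_n)\in X'^*$, I will write $\overline{\bfu}=(g(u_1),\ldots,g(u_n))\in X^n$ for the string obtained by applying $g$ to each letter; the hypothesis $H_n=F_n\circ(g,\ldots,g)$ says precisely that $H(\bfu)=F(\overline{\bfu})$ for every $\bfu\in X'^*$.

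The key observation I would record first is that the letterwise map $\bfu\mapsto\overline{\bfu}$ preserves length and commutes with restriction: for every $\bfu\in X'^*$ and every $K\subseteq\{1,\ldots,|\bfu|\}$ one has $\overline{\bfu|_K}=\overline{\bfu}|_K$, since keeping the letters indexed by $K$ and then applying $g$ yields the same string as applying $g$ letterwise and then keeping the letters indexed by $K$. This is immediate from the definitions, but it is the one point that genuinely needs to be checked.

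With this in hand, the verification is routine. Fixing $\bfu\in X'^*$, $\bfu'\in X'^{|\bfu|}$, and $K\subseteq\{1,\ldots,|\bfu|\}$ with $H(\bfu|_K)=H(\bfu'|_K)$ and $\bfu'|_{K^c}=\bfu|_{K^c}$, I would set $\bfx=\overline{\bfu}$ and $\bfx'=\overline{\bfu'}$. Then $|\bfx|=|\bfx'|$, and the two facts above turn the hypotheses into $F(\bfx|_K)=F(\bfx'|_K)$ and $\bfx'|_{K^c}=\bfx|_{K^c}$. Applying the strong B-preassociativity of $F$ then gives $F(\bfx)=F(\bfx')$, i.e.\ $H(\bfu)=H(\bfu')$, as required.

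Since the argument merely transports the defining condition of strong B-preassociativity across the length-preserving, restriction-commuting map $\bfu\mapsto\overline{\bfu}$, I do not expect any real obstacle; the only subtlety is the commutation of $g$ with restriction, which is why I would isolate it explicitly at the start. The same template (replacing the inputs by their images under a letterwise map) is presumably what underlies the analogous composition results cited from \cite{MarTeh3}.
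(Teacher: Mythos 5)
Your proof is correct and is precisely the straightforward argument the paper has in mind (the paper omits it, remarking only that "the proofs are straightforward"): transport the defining condition through the letterwise map $\bfu\mapsto\overline{\bfu}$, using that this map preserves length and commutes with restriction to $K$ and $K^c$. Isolating the commutation identity $\overline{\bfu|_K}=\overline{\bfu}|_K$ is exactly the right point to make explicit, and the edge cases (e.g.\ $K=\varnothing$, where $H(\varepsilon)$ need not equal $F(\varepsilon)$) cause no trouble since both hypotheses become trivial there.
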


\begin{proposition}[Left composition]\label{prop:leftcomp56}
Let $F\colon X^*\to Y$ be a strongly B-preassociative function and let $(g_n)_{n\geqslant 1}$ be a sequence of functions from $Y$ to $Y'$. If $g_n|_{\ran(F_n)}$ is one-to-one for every $n\geqslant 1$, then any function $H\colon X^*\to Y'$ such that $H_n=g_n\circ F_n$ for every $n\geqslant 1$ is strongly B-preassociative.
\end{proposition}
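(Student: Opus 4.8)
The plan is to verify the defining implication of strong B-preassociativity for $H$ directly, transferring equalities from $H$ back to $F$ at the appropriate arity by means of the injectivity hypothesis, and then pushing them forward again through $g$. To this end I would fix $\bfx\in X^*$, a string $\bfx'\in X^{|\bfx|}$, and a subset $K\subseteq\{1,\ldots,|\bfx|\}$, and assume the two hypotheses $H(\bfx|_K)=H(\bfx'|_K)$ and $\bfx'|_{K^c}=\bfx|_{K^c}$. Writing $m=|K|$ (so that both $\bfx|_K$ and $\bfx'|_K$ lie in $X^m$) and $n=|\bfx|$, the goal is to conclude that $H(\bfx)=H(\bfx')$.

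The first key step is to rewrite the hypothesis $H(\bfx|_K)=H(\bfx'|_K)$ in terms of $F$. Since $H_m=g_m\circ F_m$, this equality reads $g_m(F(\bfx|_K))=g_m(F(\bfx'|_K))$. Both arguments $F(\bfx|_K)$ and $F(\bfx'|_K)$ belong to $\ran(F_m)$, and by hypothesis $g_m|_{\ran(F_m)}$ is one-to-one; hence I can cancel $g_m$ to obtain $F(\bfx|_K)=F(\bfx'|_K)$.

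The second step applies the strong B-preassociativity of $F$: from $F(\bfx|_K)=F(\bfx'|_K)$ together with $\bfx'|_{K^c}=\bfx|_{K^c}$ it follows at once that $F(\bfx)=F(\bfx')$. Applying $g_n$ to both sides and using $H_n=g_n\circ F_n$ then yields $H(\bfx)=H(\bfx')$, which is exactly the required conclusion.

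I expect no genuine obstacle here, as the argument is a direct verification; the only point requiring care is the bookkeeping of arities. The injectivity of $g$ is invoked not at the full length $n$ but at the length $m=|K|$ of the restricted strings, and the hypothesis is stated precisely so that $g_m$ is one-to-one on $\ran(F_m)$ — exactly the set containing the values $F(\bfx|_K)$ and $F(\bfx'|_K)$ that the cancellation step must separate. The degenerate case $K=\varnothing$ (where $m=0$ and no $g_0$ is available) needs no special treatment, since then $\bfx'|_{K^c}=\bfx|_{K^c}$ already forces $\bfx'=\bfx$ and the conclusion is trivial. Thus the per-arity injectivity assumption is precisely what makes the cancellation legitimate, and every other equality is immediate from the hypotheses and the definitions.
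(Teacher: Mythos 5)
Your proof is correct, and it is precisely the direct verification the paper has in mind: the authors omit the proof of this proposition as ``straightforward,'' and your argument (cancel $g_m$ on $\ran(F_m)$ at the arity $m=|K|$, invoke strong B-preassociativity of $F$, then push forward through $g_n$) is that straightforward argument, with the degenerate case $K=\varnothing$ handled correctly as well.
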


We now give a factorization result for strongly B-preassociative functions. We first restrict ourselves to strongly B-preassociative functions $F\colon X^*\to Y$ which are \emph{arity-wise quasi-range-idempotent} \cite{MarTeh3}, i.e., such that $\ran(\delta_{F_n})=\ran(F_n)$ for every $n\geqslant 1$. The following theorem gives a characterization of the B-preassociative and arity-wise quasi-range-idempotent functions $F\colon X^*\to Y$ as compositions of the form $F_n=f_n\circ H_n$, where $H\colon X^*\to X\cup\{\varepsilon\}$ is a B-associative $\varepsilon$-standard operation and $f_n\colon \ran(H_n)\to Y$ is one-to-one.

Recall that a function $g$ is a \emph{quasi-inverse} \cite[Sect.~2.1]{SchSkl83} of a function $f$ if
$$
f\circ g|_{\ran(f)}=\id|_{\ran(f)}\quad\mbox{and}\quad\ran(g|_{\ran(f)})=\ran(g).
$$
Recall also that the statement ``every function has a quasi-inverse'' is equivalent to the Axiom of Choice (AC). Throughout this paper we denote the set of all quasi-inverses of $f$ by $Q(f)$.

\begin{theorem}[{\cite{MarTeh3}}]\label{thm:FactoriAWRI-BPA237111}
Assume AC and let $F\colon X^*\to Y$ be a function. The following assertions are equivalent.
\begin{enumerate}
\item[(i)] $F$ is B-preassociative and arity-wise quasi-range-idempotent.

\item[(ii)] There exists a B-associative $\varepsilon$-standard operation $H\colon X^*\to X\cup\{\varepsilon\}$ and a sequence $(f_n)_{n\geqslant 1}$ of one-to-one functions $f_n\colon\ran(H_n)\to Y$ such that $F_n=f_n\circ H_n$ for every $n\geqslant 1$.
\end{enumerate}
If condition (ii) holds, then for every $n\geqslant 1$ we have $F_n=\delta_{F_n}\circ H_n$, $f_n=\delta_{F_n}|_{\ran(H_n)}$, $f_n^{-1}\in Q(\delta_{F_n})$, and we may choose $H_n=g_n\circ F_n$ for any $g_n\in Q(\delta_{F_n})$.
\end{theorem}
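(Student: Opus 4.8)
The plan is to prove the two implications separately and then read off the auxiliary relations stated in the last clause. Throughout I write $\delta_n := \delta_{F_n}$, and I recall that arity-wise quasi-range-idempotence means $\ran(\delta_n)=\ran(F_n)$ for every $n\geqslant 1$, so that a quasi-inverse of $\delta_n$ behaves like a genuine inverse precisely on the values of $F_n$. For the easy implication (ii)~$\Rightarrow$~(i), I would start from a factorization $F_n=f_n\circ H_n$ with $H$ a B-associative $\varepsilon$-standard operation and each $f_n$ one-to-one on $\ran(H_n)$. Since $H$ is B-associative it is arity-wise range-idempotent, so $\delta_{H_n}|_{\ran(H_n)}=\id|_{\ran(H_n)}$. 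From $\delta_n(x)=F_n(x^n)=f_n(H_n(x^n))$ we get $\delta_n=f_n\circ\delta_{H_n}$; restricting to $\ran(H_n)$ yields $f_n=\delta_n|_{\ran(H_n)}$ and hence $F_n=\delta_n\circ H_n$, while $\ran(\delta_n)=f_n(\ran(H_n))=\ran(F_n)$ gives arity-wise quasi-range-idempotence. B-preassociativity of $F$ then follows from that of $H$: if $|\bfy|=|\bfy'|$ and $F(\bfy)=F(\bfy')$, injectivity of $f_{|\bfy|}$ forces $H(\bfy)=H(\bfy')$, so $H(\bfx\bfy\bfz)=H(\bfx\bfy'\bfz)$, and applying $f$ of the common arity gives $F(\bfx\bfy\bfz)=F(\bfx\bfy'\bfz)$. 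Finally $f_n$ is a bijection from $\ran(H_n)$ onto $\ran(F_n)=\ran(\delta_n)$, so $f_n^{-1}\in Q(\delta_n)$.

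For the converse (i)~$\Rightarrow$~(ii), the step that invokes AC is to pick, for every $n\geqslant 1$, a quasi-inverse $g_n\in Q(\delta_n)$ and to set $H_n=g_n\circ F_n$, $f_n=\delta_n|_{\ran(H_n)}$, together with $H_0(\varepsilon)=\varepsilon$. Because $g_n$ maps $Y$ into $X$, we have $\ran(H_n)\subseteq X$ for every $n\geqslant 1$, so no nonempty string is sent to $\varepsilon$ and $H$ is $\varepsilon$-standard (the value $H(\varepsilon)$ being immaterial by Remark~\ref{rem:a8d5}). The consistency requirement $F_n=f_n\circ H_n$ reduces to $\delta_n\circ g_n\circ F_n=F_n$, which holds since $F_n(\bfx)\in\ran(F_n)=\ran(\delta_n)$ and $\delta_n\circ g_n=\id$ on $\ran(\delta_n)$; the same identity shows that $f_n$ is the two-sided inverse of the injection $g_n|_{\ran(\delta_n)}$, hence one-to-one.

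The crux, and the step I expect to be the main obstacle, is the B-associativity of $H$. Fix $\bfx\bfy\bfz$ with $p=|\bfy|\geqslant 1$ (the case $\bfy=\varepsilon$ being trivial) and let $m=|\bfx|+p+|\bfz|$ be the total length. Since the two strings $\bfx\bfy\bfz$ and $\bfx\,H(\bfy)^p\,\bfz$ both have length $m$, we have $H(\bfx\bfy\bfz)=g_m(F(\bfx\bfy\bfz))$ and $H(\bfx\,H(\bfy)^p\,\bfz)=g_m(F(\bfx\,H(\bfy)^p\,\bfz))$, so it suffices to prove $F(\bfx\bfy\bfz)=F(\bfx\,H(\bfy)^p\,\bfz)$. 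By B-preassociativity of $F$ this reduces to the single-block identity $F(\bfy)=F(H(\bfy)^p)$, and here the quasi-inverse does the work:
$$
F_p\big(H(\bfy)^p\big) ~=~ \delta_p\big(g_p(F(\bfy))\big) ~=~ F(\bfy),
$$
the last equality holding because $F(\bfy)\in\ran(F_p)=\ran(\delta_p)$ and $g_p\in Q(\delta_p)$. Applying $g_m$ to both sides of the resulting equality $F(\bfx\bfy\bfz)=F(\bfx\,H(\bfy)^p\,\bfz)$ gives $H(\bfx\bfy\bfz)=H(\bfx\,H(\bfy)^p\,\bfz)$, which is exactly B-associativity of $H$.

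The delicate point throughout is bookkeeping of arities, so that the correct maps $f_n$, $g_n$, and $\delta_n$ are applied to strings of matching length, and using arity-wise quasi-range-idempotence at exactly the place where it guarantees that the relevant $F$-values lie in $\ran(\delta_n)$, the set on which the chosen quasi-inverse inverts $\delta_n$ on the nose. Once these two directions are in place, the auxiliary relations $F_n=\delta_n\circ H_n$, $f_n=\delta_n|_{\ran(H_n)}$, $f_n^{-1}\in Q(\delta_n)$, and the admissibility of the choice $H_n=g_n\circ F_n$ for any $g_n\in Q(\delta_n)$ are read off directly from the computations above.
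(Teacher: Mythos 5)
Your proof is correct: both implications and all four auxiliary relations are established, and the key step (B-associativity of $H_n=g_n\circ F_n$, reduced via B-preassociativity of $F$ to the single-block identity $F(H(\bfy)^{|\bfy|})=\delta_{|\bfy|}(g_{|\bfy|}(F(\bfy)))=F(\bfy)$, which is exactly where arity-wise quasi-range-idempotence is needed) is handled properly. Note that this paper does not reprove the theorem (it is imported from \cite{MarTeh3}); your argument follows precisely the construction encoded in the theorem's final clause, namely factorizing through quasi-inverses of the diagonal sections, so it is the intended route rather than a genuinely different one.
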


Using Propositions~\ref{prop:asf8ssa} and \ref{prop:sdf576}, we can show that Theorem~\ref{thm:FactoriAWRI-BPA237111} can be easily adapted to the strong version of B-preassociativity.

\begin{corollary}\label{cor:fsa7f}
Theorem~\ref{thm:FactoriAWRI-BPA237111} still holds if we replace B-preassociativity with strong B-preassociativity in assertion (i) and B-associativity with strong B-associativity in assertion (ii).
\end{corollary}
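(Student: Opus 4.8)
The plan is to reduce everything to Theorem~\ref{thm:FactoriAWRI-BPA237111}, using Proposition~\ref{prop:sdf576} to pass between the ``strong associative'' and ``strong preassociative'' pictures and the left-composition result (Proposition~\ref{prop:leftcomp56}, whose validity rests on the equivalent formulations of strong B-preassociativity in Proposition~\ref{prop:asf8ssa}) to transport strong B-preassociativity along the one-to-one factors. Write (i$'$) and (ii$'$) for the strengthened assertions. Since strong B-associativity implies B-associativity and strong B-preassociativity implies B-preassociativity, the arity-wise quasi-range-idempotence clause of~(i) and the existence of a factorization $F_n=f_n\circ H_n$ with one-to-one $f_n$ in~(ii) are already delivered by Theorem~\ref{thm:FactoriAWRI-BPA237111}. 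Thus the only genuinely new content is upgrading ``B-associative $H$'' to ``strongly B-associative $H$'' and ``B-preassociative $F$'' to ``strongly B-preassociative $F$''.

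For the implication (ii$'$) $\Rightarrow$ (i$'$), assume $H$ is strongly B-associative and $F_n=f_n\circ H_n$ with each $f_n$ one-to-one on $\ran(H_n)$. By Proposition~\ref{prop:sdf576}, $H$ is strongly B-preassociative. Since each $f_n|_{\ran(H_n)}$ is one-to-one, Proposition~\ref{prop:leftcomp56} shows that $F$ is strongly B-preassociative, while the arity-wise quasi-range-idempotence of $F$ follows from Theorem~\ref{thm:FactoriAWRI-BPA237111} applied to the (plain) B-associative operation $H$.

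For the converse (i$'$) $\Rightarrow$ (ii$'$), start from the $H$ produced by Theorem~\ref{thm:FactoriAWRI-BPA237111}: it is B-associative, $\varepsilon$-standard, and we may take $H_n=g_n\circ F_n$ for any $g_n\in Q(\delta_{F_n})$. I first observe that $g_n|_{\ran(F_n)}$ is one-to-one. Indeed, arity-wise quasi-range-idempotence gives $\ran(\delta_{F_n})=\ran(F_n)$, so the defining property of a quasi-inverse yields $\delta_{F_n}\circ g_n|_{\ran(F_n)}=\id|_{\ran(F_n)}$; hence $\delta_{F_n}$ is a left inverse of $g_n|_{\ran(F_n)}$, forcing injectivity. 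Applying Proposition~\ref{prop:leftcomp56} to the strongly B-preassociative function $F$ and the one-to-one maps $g_n|_{\ran(F_n)}$, we conclude that $H_n=g_n\circ F_n$ defines a strongly B-preassociative operation $H$. Finally, $H$ is $\varepsilon$-standard, so $\ran(H_n)\subseteq X$ for every $n\geqslant 1$, and $H$ is B-associative, hence arity-wise range-idempotent; the converse part of Proposition~\ref{prop:sdf576} then upgrades $H$ to a strongly B-associative operation. The supplementary identities of Theorem~\ref{thm:FactoriAWRI-BPA237111} carry over verbatim.

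I expect the main obstacle to be the injectivity of $g_n|_{\ran(F_n)}$, since this is precisely where left composition can fail: a quasi-inverse of $\delta_{F_n}$ need not be one-to-one on all of $Y$, and it is only the arity-wise quasi-range-idempotence hypothesis (through $\ran(\delta_{F_n})=\ran(F_n)$) that restricts the relevant domain to a set on which $\delta_{F_n}$ acts as a genuine left inverse. Everything else is a bookkeeping check that the strong versions of the hypotheses survive the same factorization already established in the non-strong case.
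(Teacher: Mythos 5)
Your proof is correct, and its overall scaffolding coincides with the paper's: both directions reduce to Theorem~\ref{thm:FactoriAWRI-BPA237111}, both use Proposition~\ref{prop:sdf576} (together with the fact that $H$ is $\varepsilon$-standard, hence $\ran(H_n)\subseteq X$) to pass from strong B-preassociativity of $H$ to strong B-associativity, and both use Proposition~\ref{prop:leftcomp56} for the direction (ii$'$)~$\Rightarrow$~(i$'$). The one genuine point of divergence is how you establish that $H$ is strongly B-preassociative in (i$'$)~$\Rightarrow$~(ii$'$). The paper does this by a direct verification of Proposition~\ref{prop:asf8ssa}(iii): from $H(\bfx\bfz)=H(\bfx'\bfz')$ it applies $f_{\length{\bfx\bfz}}$ (using $F_n=f_n\circ H_n$) to get $F(\bfx\bfz)=F(\bfx'\bfz')$, invokes strong B-preassociativity of $F$, and then applies $g_{\length{\bfx\bfy\bfz}}$ (using $H_n=g_n\circ F_n$) to return to $H$; this exploits both factorization identities simultaneously and never needs any injectivity property of $g_n$. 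You instead make Proposition~\ref{prop:leftcomp56} do the work, which requires the extra lemma that $g_n|_{\ran(F_n)}$ is one-to-one; your derivation of this from the quasi-inverse axiom $\delta_{F_n}\circ g_n|_{\ran(\delta_{F_n})}=\id|_{\ran(\delta_{F_n})}$ combined with $\ran(\delta_{F_n})=\ran(F_n)$ (arity-wise quasi-range-idempotence) is correct, and your diagnosis that this is the delicate point is exactly right. The trade-off: your route is more modular (it reuses the stated left-composition proposition twice, once in each direction, rather than proving one instance of it inline), while the paper's inline chase is self-contained and sidesteps the injectivity question entirely by always moving from $H$ to $F$ via $f_n$ rather than inverting $g_n$.
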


\begin{proof}
(i) $\Rightarrow$ (ii). Since $F$ satisfies condition (i) of Theorem~\ref{thm:FactoriAWRI-BPA237111}, it also satisfies condition (ii). Since $H$ is B-associative, it is arity-wise range-idempotent. To see that $H$ is strongly B-associative, by Proposition~\ref{prop:sdf576} it suffices to show that it is strongly B-preassociative. Let $\bfx\bfx'\bfy\bfz\bfz'\in X^*$ such that $|\bfx|=|\bfx'|$, $|\bfz|=|\bfz'|$, $|\bfx\bfz|\geqslant 1$, and $H(\bfx\bfz)=H(\bfx'\bfz')$. Then, we have $f_{|\bfx\bfz|}\circ H(\bfx\bfz)=f_{|\bfx\bfz|}\circ H(\bfx'\bfz')$, that is, $F(\bfx\bfz)=F(\bfx'\bfz')$. By strong B-preassociativity of $F$, we then have $F(\bfx\bfy\bfz)=F(\bfx'\bfy\bfz')$ and hence
$$
H(\bfx\bfy\bfz) ~=~ g_{|\bfx\bfy\bfz|}\circ F(\bfx\bfy\bfz) ~=~ g_{|\bfx\bfy\bfz|}\circ F(\bfx'\bfy\bfz') ~=~ H(\bfx'\bfy\bfz'),
$$
which shows that $H$ is strongly B-preassociative by Proposition~\ref{prop:asf8ssa} (the case when $\bfx\bfz=\varepsilon$ is trivial).

(ii) $\Rightarrow$ (i). $F$ is arity-wise quasi-range-idempotent by Theorem~\ref{thm:FactoriAWRI-BPA237111}. It is also strongly B-preassociative by Proposition~\ref{prop:leftcomp56}.
\end{proof}

We now provide a factorization result for the whole class of strongly B-preassociative functions. It is based on the following characterization of B-preassociative functions.

Recall that a string function $F\colon X^*\to X^*$ is said to be \emph{associative} \cite{LehMarTeh} if it satisfies the equation $F(\bfx\bfy\bfz)=F(\bfx F(\bfy)\bfz)$ for every $\bfx\bfy\bfz\in X^*$. It is said to be \emph{length-preserving} \cite{MarTeh4} if $|F(\bfx)|=|\bfx|$ for every $\bfx\in X^*$.

\begin{theorem}[{\cite{MarTeh4}}]\label{thm:fa7sfds}
Assume AC and let $F\colon X^*\to Y$ be a function. The following assertions are equivalent.
\begin{enumerate}
\item[(i)] $F$ is B-preassociative.

\item[(ii)] There exist an associative and length-preserving function $H\colon X^*\to X^*$ and a sequence $(f_n)_{n\geqslant 1}$ of one-to-one functions $f_n\colon\ran(H_n)\to Y$ such that $F_n=f_n\circ H_n$ for every $n\geqslant 1$.
\end{enumerate}
If condition (ii) holds, then for every $n\geqslant 1$ we have $f_n=F|_{\ran(H_n)}=F_n|_{\ran(H_n)}$, $f_n^{-1}\in Q(F_n)$, and we may choose $H_n=g_n\circ F_n$ for any $g_n\in Q(F_n)$.
\end{theorem}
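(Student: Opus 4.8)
The plan is to prove the two implications separately, the reverse implication (ii)$\Rightarrow$(i) being routine while the forward implication (i)$\Rightarrow$(ii) carries the real content. The guiding idea, already announced by the supplementary clause of the statement, is that $H$ should be obtained from $F$ by ``idempotizing'' each arity-part through a quasi-inverse: set $H_n=g_n\circ F_n$ for some $g_n\in Q(F_n)$.

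For (ii)$\Rightarrow$(i), I would start from $|\bfy|=|\bfy'|=m$ with $F(\bfy)=F(\bfy')$. Since $F_m=f_m\circ H_m$ and $f_m$ is one-to-one, this forces $H(\bfy)=H(\bfy')$. Because $H$ is length-preserving, $|\bfx\bfy\bfz|=|\bfx\bfy'\bfz|=:n$, so applying associativity of $H$ on both sides of $H(\bfy)=H(\bfy')$ gives
$$H(\bfx\bfy\bfz)=H(\bfx\,H(\bfy)\,\bfz)=H(\bfx\,H(\bfy')\,\bfz)=H(\bfx\bfy'\bfz),$$
and composing with $f_n$ yields $F(\bfx\bfy\bfz)=F(\bfx\bfy'\bfz)$, which is B-preassociativity.

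For (i)$\Rightarrow$(ii), invoking AC I would pick, for each $n\geqslant 1$, a quasi-inverse $g_n\in Q(F_n)$, set $H_n=g_n\circ F_n$ (and $H(\varepsilon)=\varepsilon$), and define $f_n:=F_n|_{\ran(H_n)}$. Length-preservation is immediate since $H_n$ maps $X^n$ into $X^n$. The defining identity $F_n\circ g_n|_{\ran(F_n)}=\id|_{\ran(F_n)}$ yields the crucial ``range-fixing'' identity $F_n\circ H_n=F_n$, equivalently $F(H(\bfy))=F(\bfy)$ with $|H(\bfy)|=|\bfy|$. Combining this with $\ran(g_n|_{\ran(F_n)})=\ran(g_n)$, one sees that every element of $\ran(H_n)$ has the form $g_n(y)$ with $y\in\ran(F_n)$ and that $f_n(g_n(y))=y$; hence $f_n$ is a bijection from $\ran(H_n)$ onto $\ran(F_n)$, and $F_n=f_n\circ H_n$ follows.

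The crux, and the step I expect to be the main obstacle, is verifying that $H$ is associative, since this is where B-preassociativity must be used in an essential way. Because $|H(\bfy)|=|\bfy|$ and $F(H(\bfy))=F(\bfy)$, B-preassociativity licenses the substitution $\bfy\mapsto H(\bfy)$ inside an arbitrary context, giving $F(\bfx\bfy\bfz)=F(\bfx\,H(\bfy)\,\bfz)$ for all $\bfx\bfy\bfz\in X^*$; applying $g_{|\bfx\bfy\bfz|}$ to both sides then produces $H(\bfx\bfy\bfz)=H(\bfx\,H(\bfy)\,\bfz)$, which is exactly associativity. The remaining identities in the statement then drop out of the construction: $f_n=F_n|_{\ran(H_n)}$ holds by definition and equals $F|_{\ran(H_n)}$ because $\ran(H_n)\subseteq X^n$, while the bijection $f_n$ has an inverse $f_n^{-1}\colon\ran(F_n)\to\ran(H_n)$ satisfying $F_n\circ f_n^{-1}=\id|_{\ran(F_n)}$, so that $f_n^{-1}\in Q(F_n)$ and one may indeed take $H_n=g_n\circ F_n$ for any $g_n\in Q(F_n)$.
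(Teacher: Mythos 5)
This theorem is not proved in the paper at all: it is imported verbatim from \cite{MarTeh4}, so there is no in-paper proof to compare against, and your argument has to be judged on its own. Your proof of the equivalence itself is correct and is the natural route (the same quasi-inverse factorization that the cited source and the analogous Theorem~\ref{thm:FactoriAWRI-BPA237111} rely on): in (ii)$\Rightarrow$(i) the injectivity of $f_m$ transfers $F(\bfy)=F(\bfy')$ to $H(\bfy)=H(\bfy')$, and associativity plus length-preservation of $H$ finish the job; in (i)$\Rightarrow$(ii) the identity $F_n\circ H_n=F_n$ for $H_n=g_n\circ F_n$, the bijectivity of $f_n=F_n|_{\ran(H_n)}$ onto $\ran(F_n)$, and the B-preassociativity step $F(\bfx\bfy\bfz)=F(\bfx H(\bfy)\bfz)$ followed by an application of $g_{|\bfx\bfy\bfz|}$ are all sound. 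Since your construction works for an \emph{arbitrary} choice of $g_n\in Q(F_n)$, it also establishes the clause ``we may choose $H_n=g_n\circ F_n$ for any $g_n\in Q(F_n)$.''

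There is, however, a genuine gap in your treatment of the remaining supplementary claims. The theorem asserts that \emph{whenever} condition (ii) holds --- that is, for \emph{every} pair $(H,(f_n)_{n\geqslant 1})$ witnessing (ii), not merely the one you constructed --- one has $f_n=F|_{\ran(H_n)}=F_n|_{\ran(H_n)}$ and $f_n^{-1}\in Q(F_n)$. You dispose of this with ``holds by definition'' and ``drop out of the construction,'' which only covers your particular witness $H_n=g_n\circ F_n$. For a general witness the identity is not definitional and requires an argument: taking $\bfx=\bfz=\varepsilon$ in the associativity equation and using length-preservation gives $H_n\circ H_n=H_n$, so every $\bfu\in\ran(H_n)$ is a fixed point of $H_n$, whence $F_n(\bfu)=f_n(H_n(\bfu))=f_n(\bfu)$, i.e., $f_n=F_n|_{\ran(H_n)}$; only then does $F_n\circ f_n^{-1}=f_n\circ f_n^{-1}=\id|_{\ran(F_n)}$ follow, and the second defining condition of a quasi-inverse, $\ran(f_n^{-1}|_{\ran(F_n)})=\ran(f_n^{-1})$ (which you never check), holds trivially because $\dom(f_n^{-1})=\ran(f_n)=\ran(F_n)$. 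This is a short fix, but without it the ``moreover'' part of the statement --- which is exactly what the present paper invokes when it adapts this theorem in Corollary~\ref{cor:fdfas} --- is unproven.
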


Proceeding as in the proof of Corollary~\ref{cor:fsa7f}, from Theorem~\ref{thm:fa7sfds} we easily derive the following characterization of the class of strongly B-preassociative functions.

\begin{corollary}\label{cor:fdfas}
Theorem~\ref{thm:fa7sfds} still holds if we replace B-preassociativity with strong B-preassociativity in assertion (i) and add the condition that $H$ is strongly B-preassociative in assertion (ii).
\end{corollary}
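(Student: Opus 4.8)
The plan is to follow the proof of Corollary~\ref{cor:fsa7f} almost verbatim, using Theorem~\ref{thm:fa7sfds} in place of Theorem~\ref{thm:FactoriAWRI-BPA237111}. The two factorizations supplied by the ``moreover'' part of Theorem~\ref{thm:fa7sfds}, namely $F_n=f_n\circ H_n$ and $H_n=g_n\circ F_n$ with $g_n\in Q(F_n)$, are exactly the devices that let us transport strong B-preassociativity back and forth between $F$ and $H$.

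For the implication (i) $\Rightarrow$ (ii), I would first observe that any strongly B-preassociative function is a fortiori B-preassociative, so Theorem~\ref{thm:fa7sfds} yields an associative and length-preserving function $H\colon X^*\to X^*$ together with a sequence of one-to-one functions $f_n\colon\ran(H_n)\to Y$ satisfying $F_n=f_n\circ H_n$, and with $H_n=g_n\circ F_n$ for some $g_n\in Q(F_n)$. It then remains only to upgrade $H$ to a strongly B-preassociative function, which I would verify through Proposition~\ref{prop:asf8ssa}(iii). Given $\bfx\bfy\bfz$ and $\bfx'\bfy\bfz'$ with $|\bfx|=|\bfx'|$, $|\bfz|=|\bfz'|$, $|\bfx\bfz|\geqslant 1$, and $H(\bfx\bfz)=H(\bfx'\bfz')$, applying $f_{|\bfx\bfz|}$ gives $F(\bfx\bfz)=F(\bfx'\bfz')$; strong B-preassociativity of $F$ via Proposition~\ref{prop:asf8ssa}(iii) yields $F(\bfx\bfy\bfz)=F(\bfx'\bfy\bfz')$; and applying $g_{|\bfx\bfy\bfz|}$ recovers $H(\bfx\bfy\bfz)=H(\bfx'\bfy\bfz')$. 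By Proposition~\ref{prop:asf8ssa}(iii) this shows that $H$ is strongly B-preassociative, the case $\bfx\bfz=\varepsilon$ being trivial.

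For the converse (ii) $\Rightarrow$ (i), I would invoke Theorem~\ref{thm:fa7sfds} to conclude that $F$ is B-preassociative and then strengthen this. Since $H$ is now assumed strongly B-preassociative and each $f_n$ is one-to-one on $\ran(H_n)$, Proposition~\ref{prop:leftcomp56} (left composition) applies with $H$ in the role of the strongly B-preassociative function and the $f_n$ in the role of the maps $g_n$, and it delivers at once that $F_n=f_n\circ H_n$ is strongly B-preassociative.

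The only point requiring care, which I regard as the main (though modest) obstacle, is checking that the two quasi-inverse factorizations are exact enough to propagate the chain of equalities in both directions. This is harmless: $f_n$ is genuinely injective on $\ran(H_n)$, so $H(\bfx\bfz)=H(\bfx'\bfz')$ forces $F(\bfx\bfz)=F(\bfx'\bfz')$, while $H_n=g_n\circ F_n$ holds as a genuine identity of functions on $X^n$, so applying $g_{|\bfx\bfy\bfz|}$ to an equality of $F$-values yields the corresponding equality of $H$-values. I would also note that, in contrast to Corollary~\ref{cor:fsa7f}, here $H$ takes values in $X^*$ rather than in $X\cup\{\varepsilon\}$; this causes no difficulty, since strong B-preassociativity is defined for an arbitrary codomain and involves no composition of $F$ with itself.
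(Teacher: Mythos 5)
Your proposal is correct and is essentially the paper's own argument: the paper proves Corollary~\ref{cor:fdfas} precisely by ``proceeding as in the proof of Corollary~\ref{cor:fsa7f}'' with Theorem~\ref{thm:fa7sfds} in place of Theorem~\ref{thm:FactoriAWRI-BPA237111}, transporting strong B-preassociativity from $F$ to $H$ via $H_n=g_n\circ F_n$ and back via Proposition~\ref{prop:leftcomp56}, exactly as you do. (Minor remark: injectivity of $f_n$ is not actually needed to pass from $H(\bfx\bfz)=H(\bfx'\bfz')$ to $F(\bfx\bfz)=F(\bfx'\bfz')$ --- the identity $F_n=f_n\circ H_n$ alone suffices --- but this does not affect the validity of your proof.)
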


Clearly, Corollary~\ref{cor:fdfas} motivates the problem of characterizing the class of those string functions which are associative, length-preserving, and strongly B-preassociative.

We end this section by an investigation of those strong B-associative functions which are invariant by replication. Recall that a variadic function $F\colon X^*\to Y$ is \emph{invariant by replication} \cite{MarTeh2} if for every $\bfx\in X^*$ and every $k\geqslant 1$ we have $F(\bfx^k)=F(\bfx)$.

\begin{definition}
We say that a variadic function $F\colon X^*\to Y$ has a \emph{multiplicatively growing range} if $\ran(F_n)\subseteq\ran(F_{kn})$ for any $k,n\geqslant 1$.
\end{definition}

The following proposition is a simultaneous generalization of several results reported in \cite{Mar00} and \cite[pp. 38-41]{GraMarMesPap09}.

\begin{proposition}\label{prop:SBA-MNE-TFAE4}
Let $F\colon X^*\to X\cup\{\varepsilon\}$ be a strongly B-associative operation. The following assertions are equivalent.
\begin{enumerate}
\item[(i)] $F$ has a multiplicatively growing range.

\item[(ii)] $F$ is invariant by replication.

\item[(iii)] For any $k,n\geqslant 1$ and any $\bfx^{(1)}\cdots\bfx^{(n)}\in X^*$, we have
$$
F((\bfx^{(1)})^k\cdots(\bfx^{(n)})^k) ~=~ F(\bfx^{(1)}\cdots\bfx^{(n)}).
$$
\end{enumerate}
Moreover, if any of these conditions hold, then
\begin{enumerate}
\item[(a)] For any $n\geqslant 1$ and any $\bfx^{(1)}\cdots\bfx^{(n)}\in X^*$ such that $|\bfx^{(1)}|=\cdots = |\bfx^{(n)}|\geqslant 1$, we have
$$
F(F(\bfx^{(1)})\cdots F(\bfx^{(n)})) ~=~  F(\bfx^{(1)}\cdots\bfx^{(n)}).
$$

\item[(b)] For any $n\geqslant 1$ and any $\bfx=x_1\cdots x_n\in X^n$, we have
$$
F(x_1\cdots{\,}x_n) ~=~ F(x'_n\cdots{\,}x'_1),
$$
where $x'_k=F(\bfx_{-k})$ and $\bfx_{-k}$ is obtained from $\bfx$ by removing the letter $x_k$.

\item[(c)] $F$ is strongly bisymmetric, i.e., for every $p$-by-$n$ matrix whose entries are in $X$, we have
    $$
    F(F(\mathbf{r}_1)\cdots F(\mathbf{r}_p)) ~=~  F(F(\mathbf{c}_1)\cdots F(\mathbf{c}_n)),
    $$
    where $\mathbf{r}_1,\ldots,\mathbf{r}_p$ denote the rows and $\mathbf{c}_1,\ldots,\mathbf{c}_n$ denote the columns of the matrix.
\end{enumerate}
\end{proposition}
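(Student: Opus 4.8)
The plan is to establish the equivalence of (i), (ii), (iii) in a cycle, and then derive the three consequences (a), (b), (c) from the invariance by replication together with strong B-associativity. The crucial workhorse will be Corollary~\ref{cor:s8da5}, which tells us that for $n\geqslant 4$ the $n$-ary part $F_n$ is symmetric in all arguments except the first and the last; this, combined with invariance by replication, will effectively let us treat inner blocks as fully symmetric.

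First I would prove the cycle (i) $\Rightarrow$ (ii) $\Rightarrow$ (iii) $\Rightarrow$ (i). For (ii) $\Rightarrow$ (iii) and (iii) $\Rightarrow$ (i) the implications are nearly immediate: (iii) is a strengthening of (ii) (take each $\bfx^{(i)}$ to be the single string $\bfx$ to recover $F(\bfx^k)=F(\bfx)$), and (ii) trivially gives $\ran(F_n)\subseteq\ran(F_{kn})$ since any value $F_n(\bfx)$ equals $F_{kn}(\bfx^k)$. The substantive direction is (i) $\Rightarrow$ (iii). Here I would argue that for $\bfx\in X^n$ with $n\geqslant 1$, strong B-associativity yields $F(\bfx^k)=F(F(\bfx^k)^{kn})$ by arity-wise range-idempotence, and I want to identify $F(\bfx^k)$ with $F(\bfx)$. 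The multiplicatively growing range hypothesis guarantees $\ran(F_n)\subseteq\ran(F_{kn})$, so $F(\bfx)$ is achieved as some $F_{kn}(\bfw)$; then using strong B-associativity to collapse and re-expand, together with the diagonal idempotence $F(t^{kn})=F(t^n)$ forced on the diagonal, I expect to force $F(\bfx^k)=F(\bfx)$. The general tuple case in (iii) then follows by splitting the string into the blocks $\bfx^{(i)}$ and replicating each block, using the preassociativity to substitute block by block.

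Next I would derive (a), (b), (c) assuming the equivalent conditions hold. For (a), when all blocks have equal length $m\geqslant 1$, strong B-associativity lets me replace each $\bfx^{(i)}$ by $F(\bfx^{(i)})^m$ inside $F(\bfx^{(1)}\cdots\bfx^{(n)})$, giving $F(F(\bfx^{(1)})^m\cdots F(\bfx^{(n)})^m)$; invariance by replication (applied blockwise via condition (iii) read backwards) then collapses each exponent $m$ down to $1$, yielding $F(F(\bfx^{(1)})\cdots F(\bfx^{(n)}))$. For (c) I would apply (a) twice to a $p\times n$ matrix: reading by rows, each row $\mathbf{r}_i$ has length $n$, so $F$ of the whole matrix equals $F(F(\mathbf{r}_1)\cdots F(\mathbf{r}_p))$; reading by columns gives $F(F(\mathbf{c}_1)\cdots F(\mathbf{c}_n))$; the two agree because both equal $F$ applied to the full $pn$-string of entries (in row-major versus column-major order), and these two orderings are interchangeable by Corollary~\ref{cor:s8da5} once the boundary letters are handled.

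The main obstacle I anticipate is part (b), the reversal identity $F(x_1\cdots x_n)=F(x'_n\cdots x'_1)$ with $x'_k=F(\bfx_{-k})$. The natural approach is to build the $n\times n$ matrix whose $k$-th row (or column) is $\bfx_{-k}$ padded appropriately, apply strong bisymmetry (c), and identify one marginal with $F(\bfx)$ via replication and the other with the reversed string of $x'_k$'s; the care needed is that $\bfx_{-k}$ has length $n-1$, so the matrix is not square in the obvious way, and one must replicate to equalize lengths before invoking (a) and (c). The delicate point throughout will be controlling the \emph{order} of arguments: strong B-associativity and Corollary~\ref{cor:s8da5} give symmetry only away from the two boundary positions, so whenever I permute or reverse I must verify that the first and last letters are positioned correctly, typically by first padding with replicated copies so that the boundary entries become interior. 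I expect the reversal in (b) to require the most careful bookkeeping of boundary positions, and I would handle small cases ($n\leqslant 3$) separately since Corollary~\ref{cor:s8da5} only gives full interior symmetry once at least one interior slot is available.
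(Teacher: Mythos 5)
Your outline for the equivalence has the right skeleton (the easy implications you actually justify are (iii)$\Rightarrow$(ii) and (ii)$\Rightarrow$(i), not the ones you name, but that is cosmetic), and for (i)$\Rightarrow$(iii) you name the right ingredients; the trouble is in the two places where you defer the assembly. First, the diagonal identity $F(t^{kn})=F(t^n)$ that you declare ``forced'' is not automatic: for $t=F(\bfx)$ it is exactly where hypothesis (i) enters. The paper proves it as the identity $F(F(\bfx)^{k\length{\bfx}})=F(\bfx)$: by (i) pick $\bfw\in X^{k\length{\bfx}}$ with $F(\bfw)=F(\bfx)$, and arity-wise range-idempotence (Proposition~\ref{prop:sdf576}) gives $F(F(\bfx)^{k\length{\bfx}})=F(F(\bfw)^{k\length{\bfx}})=F(\bfw)=F(\bfx)$. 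Since you cite both ingredients, this part is a deferral rather than an error. The genuine error is the multi-block step: reducing (iii) to the one-block case ``using the preassociativity to substitute block by block'' is not a legal move, because strong B-preassociativity (and strong B-associativity) only licenses replacing a scattered substring $\bfx|_K$ by a string of the \emph{same length} with the same $F$-value, whereas trading $\bfx^{(i)}$ for $(\bfx^{(i)})^k$ changes the length. Relating $(\bfx^{(1)})^k\cdots(\bfx^{(n)})^k$ to $\bfx^{(1)}\cdots\bfx^{(n)}$ is the whole difficulty, and the paper does it in one stroke: partition the positions of $(\bfx^{(1)})^k\cdots(\bfx^{(n)})^k$ into $k$ scattered subsequences, each reading $\bfx^{(1)}\cdots\bfx^{(n)}$, apply strong B-associativity to collapse the string to $F(\bfx^{(1)}\cdots\bfx^{(n)})^{kN}$ with $N=\length{\bfx^{(1)}\cdots\bfx^{(n)}}$, and finish with the identity above. (Your own workhorse, Corollary~\ref{cor:s8da5}, would also repair this step, since $(\bfx^{(1)})^k\cdots(\bfx^{(n)})^k$ and $(\bfx^{(1)}\cdots\bfx^{(n)})^k$ share first letter, last letter, and multiset of letters---but you never invoke it here, and as written the step fails.)

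Among the consequences, your (a) is the paper's own argument, and your (c) is correct and genuinely different: the paper collapses the column subsequences inside $F(\mathbf{r}_1\cdots\mathbf{r}_p)$ by strong B-associativity and then applies (ii), while you use interior symmetry; the boundary handling you worry about there is vacuous, since the row-major and column-major strings both begin with the $(1,1)$ entry and end with the $(p,n)$ entry. The remaining genuine gap is (b), which you leave as a plan while conceding you have not done the bookkeeping---but that bookkeeping \emph{is} the content of (b). The reversal cannot be finessed: strongly B-associative operations need not be symmetric in their extreme arguments (take $F_n(\bfx)=x_1$, which satisfies all hypotheses here), so $F(x'_1\cdots x'_n)$ and $F(x'_n\cdots x'_1)$ differ in general, and a matrix whose $k$-th row is $\bfx_{-k}$ proves the wrong statement. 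What works is the $n\times(n-1)$ matrix whose $k$-th row is $\bfx_{-(n+1-k)}$ (rows in reversed order; no padding or replication is needed, since (c) applies to any rectangular matrix): the row side of (c) is then $F(x'_n\cdots x'_1)$, while the column side equals, by (a), then Corollary~\ref{cor:s8da5} (the column-major string begins with $x_1$, ends with $x_n$, and contains each $x_i$ exactly $n-1$ times), then (iii), the value $F(x_1^{n-1}\cdots x_n^{n-1})=F(x_1\cdots x_n)$. For comparison, the paper gets (b) in three lines, with the reversal appearing automatically: $F(x_1\cdots x_n)=F(x_1^{n-1}\cdots x_n^{n-1})$ by (iii); one application of strong B-associativity to the $n$ scattered subsequences of $x_1^{n-1}\cdots x_n^{n-1}$ occupying positions $jn+1-k$, $1\leqslant j\leqslant n-1$ (the $k$-th of which reads exactly $\bfx_{-k}$), turns this into $F((x'_n\cdots x'_1)^{n-1})$; and (ii) finishes. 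So your proposal is repairable, but both the multi-block step of (iii) and all of (b) stand as genuine gaps.
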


\begin{proof}
(iii) $\Rightarrow$ (ii). Taking $n=1$, we see that $F$ is invariant by replication.

(ii) $\Rightarrow$ (i). Let $k,n\geqslant 1$ and $\bfx\in X^n$. Then $F(\bfx)=F(\bfx^k)\in\ran(F_{kn})$. Therefore, $F$ has a multiplicatively growing range.

(i) $\Rightarrow$ (iii). Let us first show that
\begin{equation}\label{eq:RMAI}
F(F(\bfx)^{k|\bfx|})=F(\bfx),\qquad\bfx\in X^*,~k\geqslant 1.
\end{equation}
Let $m\geqslant 0$, $k\geqslant 1$, and take $\bfx\in X^m$ and $\bfz\in X^{km}$ such that $F(\bfx)=F(\bfz)$. Since $F$ is arity-wise range-idempotent by Proposition~\ref{prop:sdf576}, it follows that $F(F(\bfx)^{km})=F(F(\bfz)^{km})=F(\bfz)=F(\bfx)$, which proves \eqref{eq:RMAI}.

Then, for any $n\geqslant 1$ and any $\bfx^{(1)}\cdots\bfx^{(n)}\in X^*$ we have
\begin{eqnarray*}
F((\bfx^{(1)})^k\cdots(\bfx^{(n)})^k) &=& F(F(\bfx^{(1)}\cdots\bfx^{(n)})^{k|\bfx^{(1)}\cdots\bfx^{(n)}|})\qquad\mbox{(strong B-associativity)}\\
&=& F(\bfx^{(1)}\cdots\bfx^{(n)})\qquad\mbox{(by condition \eqref{eq:RMAI})}.
\end{eqnarray*}

Let us now show that conditions (a), (b), and (c) hold.

(a) Setting $k=|\bfx^{(1)}|=\cdots = |\bfx^{(n)}|\geqslant 1$ we have
\begin{eqnarray*}
F(\bfx^{(1)}\cdots\bfx^{(n)}) &=& F(F(\bfx^{(1)})^k\cdots F(\bfx^{(n)})^k)\qquad\mbox{(B-associativity)}\\
&=& F(F(\bfx^{(1)})\cdots F(\bfx^{(n)}))\qquad\mbox{(by (iii))}.
\end{eqnarray*}

(b) We have
\begin{eqnarray*}
F(x_1\cdots x_n) &=& F(x_1^{n-1}\cdots x_n^{n-1})\qquad\mbox{(by (iii))}\\
&=& F((x'_n\cdots x'_1)^{n-1})\qquad\mbox{(strong B-associativity)}\\
&=& F(x'_n\cdots x'_1)\qquad\mbox{(by (ii))}.
\end{eqnarray*}

(c) We have
\begin{eqnarray*}
F(F(\mathbf{r}_1)\cdots F(\mathbf{r}_p)) &=& F(\mathbf{r}_1\cdots \mathbf{r}_p)\qquad\mbox{(by (a))}\\
&=& F((F(\mathbf{c}_1)\cdots F(\mathbf{c}_n))^p)\qquad\mbox{(strong B-associativity)}\\
&=& F(F(\mathbf{c}_1)\cdots F(\mathbf{c}_n))\qquad\mbox{(by (ii))}.
\end{eqnarray*}
The proof is now complete.
\end{proof}

\begin{remark}
Let $F\colon X^*\to X\cup\{\varepsilon\}$ be a strongly B-associative operation having a multiplicative growing range and such that $\ran(F_n)\subseteq X$ for every $n\geqslant 1$. Proposition~\ref{prop:SBA-MNE-TFAE4}(a) enables us to translate certain functional conditions involving $F$ and letters in $X$ into similar functional conditions involving $F$ and strings of the same length. To illustrate, starting from the condition
\begin{equation}\label{eq:8x6xcv}
F(xyz) ~=~ F(F(xz)y F(xz)),\qquad xyz\in X^3,
\end{equation}
which holds by Eq.~\eqref{eq:sadf576}, we derive the condition
$$
F(\bfx\bfy\bfz) ~=~ F(F(\bfx\bfz)F(\bfy)F(\bfx\bfz)),\qquad \bfx\bfy\bfz\in X^*,\quad |\bfx|=|\bfy|=|\bfz|\geqslant 1.
$$
Indeed, it suffices to set $x=F(\bfx)$, $y=F(\bfy)$, and $z=F(\bfz)$ in \eqref{eq:8x6xcv} and apply Proposition~\ref{prop:SBA-MNE-TFAE4}(a) to the resulting condition.
\end{remark}

\section{Strongly B-preassociative mean functions}

In this final section we recall a variant of Kolmogoroff-Nagumo's characterization of the class of quasi-arithmetic means based on the strong B-associativity property. We also generalize this characterization to strongly B-preassociative functions.

Let $\mathbb{I}$ be a nontrivial real interval (i.e., nonempty and not a singleton), possibly unbounded. Recall that a function $F\colon \mathbb{I}^*\to\R$ is said to be a \emph{quasi-arithmetic pre-mean function} \cite{MarTeh3} if there exist continuous and strictly increasing functions $f\colon\mathbb{I}\to\R$ and $f_n\colon\R\to\R$ $(n\geqslant 1)$ such that
$$
F_n(\bfx) ~=~ f_n\bigg(\frac{1}{n}\sum_{i=1}^n f(x_i)\bigg),\qquad n\geqslant 1.
$$
This function is said to be a \emph{quasi-arithmetic mean function} (see, e.g., \cite[Sect.~4.2]{GraMarMesPap09}) if $f_n=f^{-1}$ for every $n\geqslant 1$. In this case we have $\ran(F_n)\subseteq\mathbb{I}$ for every $n\geqslant 1$.

Thus defined, the class of quasi-arithmetic pre-mean functions includes all the quasi-arithmetic mean functions. Actually the quasi-arithmetic mean functions are exactly those quasi-arithmetic pre-mean functions which are \emph{idempotent}, that is, such that $\delta_{F_n}=\id_{\mathbb{I}}$ for every $n\geqslant 1$. However, there are also many non-idempotent quasi-arithmetic pre-mean functions. Taking for instance $f_n(x)=nx$ and $f(x)=x$ over the reals $\mathbb{I}=\R$, we obtain the sum function. Taking $f_n(x)=\exp(nx)$ and $f(x)=\ln(x)$ over $\mathbb{I}=\left]0,\infty\right[$, we obtain the product function.

The following proposition \cite{MarTeh3} shows that the generators $f_n$ and $f$ of any quasi-arithmetic pre-mean function are defined up to an affine transformation.

\begin{proposition}[{\cite{MarTeh3}}]\label{prop:genfg}
Let $\mathbb{I}$ be a nontrivial real interval, possibly unbounded. Let $f,g\colon\mathbb{I}\to\R$ and $f_n,g_n\colon\R\to\R$ $(n\geqslant 1)$ be continuous and strictly monotonic functions. Then the functions $f_n(\frac{1}{n}\sum_{i=1}^n f(x_i))$ and $g_n(\frac{1}{n}\sum_{i=1}^n g(x_i))$ coincide on $\mathbb{I}^n$ if and only if there exist $r,s\in\R$, $r\neq 0$, such that $g_n^{-1}\circ f_n=g\circ f^{-1}=r{\,}\id+s$ for every $n\geqslant 1$.
\end{proposition}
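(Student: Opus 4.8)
The plan is to prove the two implications separately, the reverse one being a direct substitution and the forward one resting on the classical fact that a continuous solution of Jensen's equation on an interval is affine. For the sufficiency, I would suppose that $g\circ f^{-1}=r\,\id+s$ and $g_n^{-1}\circ f_n=r\,\id+s$ with $r\neq 0$. The first equality rewrites as $g=r\,f+s$ on $\mathbb{I}$ (put $x=f^{-1}(u)$), while applying $g_n$ to the second gives $f_n(t)=g_n(rt+s)$ for every $t$ in the relevant domain. Then for $\bfx\in\mathbb{I}^n$, setting $t=\frac1n\sum_{i=1}^n f(x_i)$, I would simply compute
\[
g_n\Big(\frac1n\sum_{i=1}^n g(x_i)\Big)=g_n(rt+s)=f_n(t)=f_n\Big(\frac1n\sum_{i=1}^n f(x_i)\Big),
\]
so that the two functions coincide on $\mathbb{I}^n$ for every $n\geqslant 1$.

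For the necessity, assume the two functions coincide on $\mathbb{I}^n$ for all $n\geqslant 1$. The key object is $\phi:=g\circ f^{-1}$, which is defined on the interval $J:=\ran(f)$, continuous, and strictly monotonic (hence injective) as a composition of such functions. Evaluating the coincidence on the diagonal $x_1=\cdots=x_n=x$ yields $f_n(f(x))=g_n(g(x))$, and since $g(x)=\phi(f(x))$ and $f(x)$ ranges over all of $J$, this gives
\[
f_n=g_n\circ\phi\qquad\text{on }J,\text{ for every }n\geqslant 1.
\]
Substituting this back into the coincidence for $n=2$ and writing $u=f(x_1)$, $v=f(x_2)$, I obtain $g_2\big(\phi(\tfrac{u+v}2)\big)=g_2\big(\tfrac12(\phi(u)+\phi(v))\big)$ for all $u,v\in J$, where I use that $\tfrac{u+v}2\in J$ because $J$ is an interval.

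Cancelling the injective function $g_2$ then produces Jensen's equation $\phi(\tfrac{u+v}2)=\tfrac12(\phi(u)+\phi(v))$ on $J$. Since $\phi$ is continuous, this forces $\phi=r\,\id+s$ on $J$ for some $r,s\in\R$, with $r\neq 0$ because $\phi$ is injective; this is exactly $g\circ f^{-1}=r\,\id+s$. Feeding $\phi=r\,\id+s$ into $f_n=g_n\circ\phi$ gives $f_n(t)=g_n(rt+s)$, i.e.\ $g_n^{-1}\circ f_n=r\,\id+s$, for every $n\geqslant 1$, as required. The hard part will be the forward direction, and within it the clean reduction to Jensen's equation: the diagonal evaluation is what disentangles $f_n$ from $g_n$ so that a single cancellation of $g_2$ isolates the midpoint-affinity of $\phi$, after which invoking the continuous-solution theorem for Jensen's equation is routine. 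I would also keep careful track of domains throughout, noting that the arguments fed to $f_n$ and $g_n$ range exactly over $J=\ran(f)$, so the asserted identities for $g_n^{-1}\circ f_n$ are to be read on that interval.
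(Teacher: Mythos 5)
Your proof is correct. Note that the paper itself does not prove this proposition at all: it is imported verbatim from \cite{MarTeh3}, so there is no in-paper argument to compare against. Your route --- sufficiency by direct substitution using $g=rf+s$ and $f_n=g_n\circ(r\,\id+s)$, necessity by diagonal evaluation to get $f_n=g_n\circ\phi$ on $J=\ran(f)$, then cancelling the injective $g_2$ to reduce the $n=2$ identity to Jensen's equation for the continuous function $\phi=g\circ f^{-1}$ --- is the standard argument for this kind of uniqueness-of-generators statement and matches the proof given in the cited reference in all essentials. Your closing remark about domains is also the right reading: the hypothesis only constrains $f_n$ and $g_n$ on $J$ (averages of elements of $J$ stay in $J$ by convexity), so the identity $g_n^{-1}\circ f_n=r\,\id+s$ can only be asserted there, and interpreting the proposition that way is the only way it can be an equivalence.
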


We now recall the characterization of the class of quasi-arithmetic mean functions as given by Kolmogoroff \cite{Kol30} and Nagumo \cite{Nag30}. The following theorem gives the characterization following Kolmogoroff (we set $F(\varepsilon)$ to an arbitrary value in $\mathbb{I}$, see Remark~\ref{rem:a8d5}). Nagumo's characterization is the same except that the strict increasing monotonicity of each function $F_n$ is replaced with the strict internality of $F_2$ (i.e., $x<y$ implies $x<F_2(x,y)<y$).

\begin{theorem}[Kolmogoroff-Nagumo]\label{thm:KolNag30}
Let $\mathbb{I}$ be a nontrivial real interval, possibly unbounded. A variadic function $F\colon\mathbb{I}^*\to\mathbb{I}$ is B-associative and, for every $n\geqslant 1$, the $n$-ary part $F_n$ is symmetric, continuous, idempotent, and strictly increasing in each argument if and only if $F$ is a quasi-arithmetic mean function.
\end{theorem}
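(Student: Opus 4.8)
The plan is to prove the two implications separately, the forward (``if'') direction being a routine verification and the converse carrying all the difficulty. For the easy direction, suppose $F$ is a quasi-arithmetic mean function with generator $f$, so that $F_n(\bfx)=f^{-1}\big(\frac{1}{n}\sum_{i=1}^n f(x_i)\big)$. Then symmetry, continuity, and strict increasing monotonicity of each $F_n$ are immediate from the corresponding properties of $f$ and $f^{-1}$ and of the arithmetic mean, while idempotence follows from $F_n(x^n)=f^{-1}(f(x))=x$. For B-associativity one checks that replacing a substring $\bfy$ by $F(\bfy)^{|\bfy|}$ leaves the total sum unchanged, since $|\bfy|{\,}f(F(\bfy))=\sum_i f(y_i)$; hence $F(\bfx\bfy\bfz)=F(\bfx F(\bfy)^{|\bfy|}\bfz)$.

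For the converse, assume $F$ has all the listed properties, and first record three consequences. First, since each $F_n$ is symmetric and $F$ is B-associative, $F$ is strongly B-associative. Second, idempotence forces $\delta_{F_n}=\id_{\mathbb{I}}$, so $\ran(F_n)=\mathbb{I}$ for every $n\geqslant 1$; in particular $F$ has a multiplicatively growing range, and Proposition~\ref{prop:SBA-MNE-TFAE4} yields that $F$ is invariant by replication, i.e.\ $F(\bfx^k)=F(\bfx)$. Third, monotonicity together with idempotence gives internality, $\min(\bfx)\leqslant F_n(\bfx)\leqslant\max(\bfx)$.

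The heart of the argument is the construction of the generator. Fix $a<b$ in $\mathbb{I}$ and, for rationals $\lambda=k/N\in[0,1]$, set $\psi(\lambda)=F(a^k b^{N-k})$; invariance by replication makes this independent of the representation of $\lambda$, strict monotonicity makes $\psi$ strictly increasing, and continuity together with internality let me extend $\psi$ to a continuous strictly increasing bijection $[0,1]\to[a,b]$. The key computation uses strong B-associativity and replication: for $\lambda=p/N$ and $\mu=r/N$,
$$
F_2(\psi(\lambda),\psi(\mu)) ~=~ F(a^p b^{N-p} a^r b^{N-r}) ~=~ F(a^{p+r} b^{2N-p-r}) ~=~ \psi\!\left(\tfrac{\lambda+\mu}{2}\right),
$$
where the first equality uses B-associativity to merge the two length-$N$ blocks and then replication, the second uses symmetry, and the third is the definition of $\psi$. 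Writing $g=\psi^{-1}$, this reads $g(F_2(x,y))=\frac{g(x)+g(y)}{2}$ on grid points, hence on all of $[a,b]$ by continuity and density, so $F_2$ is quasi-arithmetic on $[a,b]$ with generator $g$. I would then glue these local generators into a single continuous strictly monotone $f\colon\mathbb{I}\to\R$ by enlarging $[a,b]$ and invoking Proposition~\ref{prop:genfg} (uniqueness of generators up to an affine transformation) to reconcile overlaps, and finally pass from $F_2$ to general $F_n$ by induction: B-associativity rewrites $F_n(x_1\cdots x_n)=F_n(u^{n-1}x_n)$ with $u=F_{n-1}(x_1\cdots x_{n-1})$, and the two-value weighted formula $F_n(u^{n-1}x_n)=f^{-1}\big(\frac{(n-1)f(u)+f(x_n)}{n}\big)$ — obtained by the same block-merging computation — closes the induction to give $F_n(\bfx)=f^{-1}\big(\frac1n\sum_i f(x_i)\big)$.

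The main obstacle is this generator step. Proving that $\psi$ extends to a genuine continuous strictly monotone bijection, and that the rational-weight identity passes to the limit, requires the full combined force of continuity, strict monotonicity, and internality; and the gluing of the local generators across nested subintervals must be carried out carefully, via Proposition~\ref{prop:genfg}, so that the resulting $f$ is globally well defined and strictly monotone on all of $\mathbb{I}$.
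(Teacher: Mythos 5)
Your overall route is the same one the paper takes: the paper proves the stronger Theorem~\ref{thm:9asf7} (symmetry and idempotence dropped, strong B-associativity assumed) via Lemma~\ref{lemma:psi}, and your outline mirrors it step for step — the easy sufficiency check; the observations that symmetry plus B-associativity give strong B-associativity and that idempotence gives a multiplicatively growing range, hence replication invariance via Proposition~\ref{prop:SBA-MNE-TFAE4}; the rational-grid function $\psi(k/N)=F(a^kb^{N-k})$ with the block-merging identity; and the nested-interval gluing via Proposition~\ref{prop:genfg}. Your computations are correct (modulo a harmless orientation slip: with your convention $\psi(0)=F(b^N)=b$ and $\psi(1)=a$, so $\psi$ is strictly \emph{decreasing}), and your reduction to $F_2$ followed by induction on $n$ is a legitimate variant of the paper's direct use of the $n$-ary identity of Lemma~\ref{lemma:psi}.

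The genuine gap is the step you yourself call the main obstacle: you assert, but never prove, that $\psi$ extends to a continuous strictly monotone bijection of $[0,1]$ onto $[a,b]$. ``Continuity together with internality'' does not deliver this: a strictly monotone function on $\Q\cap[0,1]$ can perfectly well have jump discontinuities, and its range need not be dense in $[a,b]$, in which case $g=\psi^{-1}$ is not defined on all of $[a,b]$ and the density/continuity passage from grid points to all of $[a,b]^n$ collapses. The missing idea is a contradiction argument that exploits the functional equation itself, not just the regularity hypotheses. In the paper: if at some interior point $x$ the one-sided limits satisfy $u=\lim_{z\to x^-}\psi(z)<v=\lim_{z\to x^+}\psi(z)$, choose rationals $z_m\to x^-$, $z'_m\to x^+$ with $(z_m+z'_m)/2<x$; then continuity of $F_2$ gives
$$
u ~=~ \lim_{m\to\infty}\psi\Big(\tfrac{z_m+z'_m}{2}\Big) ~=~ \lim_{m\to\infty}F(\psi(z_m),\psi(z'_m)) ~=~ F(u,v),
$$
which is impossible because strict monotonicity gives $F(u,v)>F(u,u)=u$. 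A separate argument of the same kind is needed at the endpoints: writing $\alpha=\lim_{z\to 0^+}\psi(z)$, the functional equation yields $F(b,\alpha)=\psi(1/2)=F(b,a)$, and injectivity of $F_2$ in its second argument forces $\alpha=a$ (similarly at the other end); this is exactly what makes the extension surjective onto $[a,b]$. Without these two arguments the proof does not go through, so they are not omitted routine verifications but the analytic heart of the theorem; everything else in your outline is correct and matches the paper's proof.
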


As recently observed by the authors \cite{MarTeh3}, idempotence can be removed from the assumptions of Theorem~\ref{thm:KolNag30}. Indeed, if a B-associative function $F\colon\mathbb{I}^*\to\mathbb{I}$ is such that $\delta_{F_n}$ is one-to-one for some $n\geqslant 1$, then necessarily $\delta_{F_n}=\id_{\mathbb{I}}$. This observation immediately follows from the identity $\delta_{F_n}=\delta_{F_n}\circ\delta_{F_n}$, which holds whenever $F$ is B-associative.

In the following theorem, we show that Kolmogoroff-Nagumo's characterization still holds if we replace both B-associativity and symmetry with strong B-associativity. This result was already established in \cite{Mar00}. However, here we provide an alternative proof based on Kolmogoroff's ideas. Here again, idempotence is redundant.

We first consider a lemma which generalizes the result reported in \cite[Lemma~4.9]{GraMarMesPap09}.

\begin{lemma}\label{lemma:psi}
Let $F\colon X^*\to X\cup\{\varepsilon\}$ be a strongly B-associative operation having a multiplicatively growing range. Then, for any $\mathbf{a},\mathbf{b}\in X^*\setminus\{\varepsilon\}$, there exists a function $\psi\colon [0,1]\cap\Q\to X\cup\{\varepsilon\}$, namely
$$
\psi(p/q) ~=~ F(\mathbf{b}^p\mathbf{a}^{q-p}),\qquad p/q\in [0,1]\cap\Q,
$$
with $\psi(0)=F(\mathbf{a})$ and $\psi(1)=F(\mathbf{b})$, such that for every $n\geqslant 1$ and every $\bfz\in [0,1]^n\cap\Q^n$ such that $z_1\neq 0$ and $z_n\neq 1$, we have
$$
F(\psi(z_1)\cdots\psi(z_n)) ~=~ \psi\bigg(\frac{1}{n}\,\sum_{i=1}^nz_i\bigg).
$$
\end{lemma}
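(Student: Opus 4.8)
The plan is to first establish the well-definedness of $\psi$, namely that the displayed formula $\psi(p/q)=F(\mathbf{b}^p\mathbf{a}^{q-p})$ does not depend on the particular representation of the rational $p/q$, and then to prove the averaging identity by induction together with an application of the structural results available for strongly B-associative operations with multiplicatively growing range, in particular Proposition~\ref{prop:SBA-MNE-TFAE4}. For well-definedness, if $p/q = p'/q'$ with $q' = mq$ and $p' = mp$, then $\mathbf{b}^{p'}\mathbf{a}^{q'-p'}=(\mathbf{b}^{p}\mathbf{a}^{q-p})^m$ up to a reshuffling of the blocks; here I would use invariance by replication (assertion (ii) of Proposition~\ref{prop:SBA-MNE-TFAE4}) to collapse the $m$-fold replication, and Corollary~\ref{cor:s8da5} together with Lemma~\ref{lemma:8sf6} to justify the permutation of the interior copies of $\mathbf{a}$ and $\mathbf{b}$ so that the two strings have the same value under $F$. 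The boundary cases $\psi(0)=F(\mathbf{a})$ and $\psi(1)=F(\mathbf{b})$ are then immediate, and they also explain why the hypotheses $z_1\neq 0$ and $z_n\neq 1$ are imposed: they guarantee that in each $\psi(z_i)=F(\mathbf{b}^{p_i}\mathbf{a}^{q_i-p_i})$ the string actually starts with a copy of $\mathbf{b}$ and ends with a copy of $\mathbf{a}$, which is what lets the concatenation behave correctly at the seams.

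Next I would prove the main averaging identity. After choosing a common denominator $q$ so that $z_i = p_i/q$ for each $i$, the left-hand side is
$$
F(\psi(z_1)\cdots\psi(z_n)) ~=~ F\bigl(F(\mathbf{b}^{p_1}\mathbf{a}^{q-p_1})\cdots F(\mathbf{b}^{p_n}\mathbf{a}^{q-p_n})\bigr).
$$
Each inner argument $\mathbf{b}^{p_i}\mathbf{a}^{q-p_i}$ is a string of the same length, namely $q|\mathbf{a}|=q|\mathbf{b}|$ (assuming $|\mathbf{a}|=|\mathbf{b}|$; if not, one first replaces $\mathbf{a},\mathbf{b}$ by equal-length replications using invariance by replication, which does not change their $F$-values). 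Hence assertion (a) of Proposition~\ref{prop:SBA-MNE-TFAE4} applies and lets me pull the inner $F$'s out:
$$
F\bigl(F(\mathbf{b}^{p_1}\mathbf{a}^{q-p_1})\cdots F(\mathbf{b}^{p_n}\mathbf{a}^{q-p_n})\bigr) ~=~ F\bigl((\mathbf{b}^{p_1}\mathbf{a}^{q-p_1})\cdots(\mathbf{b}^{p_n}\mathbf{a}^{q-p_n})\bigr).
$$
The concatenated string on the right contains $\sum_i p_i$ copies of $\mathbf{b}$ and $nq-\sum_i p_i$ copies of $\mathbf{a}$, arranged in some order. Using Corollary~\ref{cor:s8da5} and Lemma~\ref{lemma:8sf6} to permute the interior blocks (keeping a copy of $\mathbf{b}$ first and a copy of $\mathbf{a}$ last, which is legitimate precisely because $z_1\neq 0$ and $z_n\neq 1$), I can reorder this string into $\mathbf{b}^{\sum_i p_i}\mathbf{a}^{nq-\sum_i p_i}$ without changing its $F$-value. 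This last string is exactly $\psi\bigl((\sum_i p_i)/(nq)\bigr)=\psi\bigl(\tfrac1n\sum_i z_i\bigr)$, which is the desired conclusion.

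The main obstacle I anticipate is the careful permutation argument at the seams: Corollary~\ref{cor:s8da5} guarantees symmetry of $F_{k+2}$ only in its interior arguments, so I may freely permute everything except the first and last letters of each string. I must therefore track which letters occupy the extreme positions throughout the manipulations and argue that, under the hypotheses $z_1\neq 0$ and $z_n\neq 1$, a copy of $\mathbf{b}$ can always be kept at the very front and a copy of $\mathbf{a}$ at the very end, so that the target string $\mathbf{b}^{\sum p_i}\mathbf{a}^{nq-\sum p_i}$ is reachable by interior permutations alone. A clean way to handle this is to work blockwise: treat each $\mathbf{a}$ and $\mathbf{b}$ as a block of length $|\mathbf{a}|$ and apply the invariant-by-replication and symmetry properties at the block level via assertion (a), which reduces the problem to permuting letters $F(\mathbf{a})$ and $F(\mathbf{b})$ in a string over $X$, where Corollary~\ref{cor:s8da5} applies directly. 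The boundary bookkeeping is routine once this block-reduction is in place, and the remaining computations are the elementary rational-arithmetic identities already indicated above.
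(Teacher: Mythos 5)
Your overall strategy coincides with the paper's: prove well-definedness of $\psi$ first, then pass to a common denominator, pull the inner $F$'s out with Proposition~\ref{prop:SBA-MNE-TFAE4}(a), and sort the resulting concatenation into $\mathbf{b}^{\sum_i p_i}\mathbf{a}^{nq-\sum_i p_i}$ using the interior symmetry of Corollary~\ref{cor:s8da5}, the hypotheses $z_1\neq 0$ and $z_n\neq 1$ serving exactly to keep a letter of $\mathbf{b}$ in the first position and a letter of $\mathbf{a}$ in the last one. In the case $|\mathbf{a}|=|\mathbf{b}|$ your argument is correct and essentially the paper's. (For well-definedness the paper is slightly slicker: it applies Proposition~\ref{prop:SBA-MNE-TFAE4}(iii) twice, replicating the two blocks by $p'$ and then by $p$, and uses the identity $p'(q-p)=p(q'-p')$; no permutation, hence no boundary bookkeeping, is needed there. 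Your replication-plus-interior-permutation variant also works.)

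The genuine gap is your parenthetical reduction of the case $|\mathbf{a}|\neq|\mathbf{b}|$ to the equal-length case. Replacing $\mathbf{a},\mathbf{b}$ by $\mathbf{a}^{|\mathbf{b}|},\mathbf{b}^{|\mathbf{a}|}$ does preserve $F(\mathbf{a})$ and $F(\mathbf{b})$, but that is not what is needed: it must preserve every value $\psi(p/q)=F(\mathbf{b}^p\mathbf{a}^{q-p})$, i.e.\ one needs $F(\mathbf{b}^{p|\mathbf{a}|}\mathbf{a}^{(q-p)|\mathbf{b}|})=F(\mathbf{b}^{p}\mathbf{a}^{q-p})$, and nothing available gives this: Proposition~\ref{prop:SBA-MNE-TFAE4}(iii) only allows the \emph{same} replication exponent on all blocks. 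In fact no argument can close this gap, because the conclusion itself fails when $|\mathbf{a}|\neq|\mathbf{b}|$. Take $F$ to be the $\varepsilon$-standard arithmetic mean on $\R^*$ (strongly B-associative, idempotent, hence with multiplicatively growing range), $\mathbf{a}=0$ (one letter) and $\mathbf{b}=11$ (two letters). Then $\psi(p/q)=2p/(p+q)$ is perfectly well defined, but for $n=2$, $z_1=\frac12$, $z_2=0$ (which satisfy $z_1\neq 0$, $z_2\neq 1$) one gets
$$
F(\psi(z_1)\psi(z_2)) ~=~ F\Big(\tfrac23{\,},0\Big) ~=~ \tfrac13 \qquad\text{while}\qquad \psi\Big(\tfrac14\Big) ~=~ \tfrac25{\,}.
$$
The failure occurs precisely where you anticipated trouble: Proposition~\ref{prop:SBA-MNE-TFAE4}(a) requires blocks of equal length, and the blocks $\mathbf{b}^{p_i}\mathbf{a}^{q-p_i}$ have unequal lengths when $|\mathbf{a}|\neq|\mathbf{b}|$. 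You should know that this defect is inherited from the lemma itself: the paper's own proof makes the same silent equal-length assumption at that step, and the lemma is only ever invoked (in the proof of Theorem~\ref{thm:9asf7}) with $\mathbf{a}$ and $\mathbf{b}$ single letters. The honest fix is to add the hypothesis $|\mathbf{a}|=|\mathbf{b}|$ to the statement, not to try to remove it by replication.
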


\begin{proof}
We first observe that $\psi$ is a well-defined function. Indeed, if $p/q=p'/q'$ are two representations of the same rational, then we have
\begin{eqnarray*}
F(\mathbf{b}^p\mathbf{a}^{q-p}) &=& F(\mathbf{b}^{p'p}\mathbf{a}^{p'(q-p)})\qquad\mbox{(by Proposition~\ref{prop:SBA-MNE-TFAE4}(iii))}\\
&=& F(\mathbf{b}^{pp'}\mathbf{a}^{p(q'-p')})\\
&=& F(\mathbf{b}^{p'}\mathbf{a}^{q'-p'})\qquad\mbox{(by Proposition~\ref{prop:SBA-MNE-TFAE4}(iii))}.
\end{eqnarray*}

Now, for any $z_1=p_1/q,\ldots,z_n=p_n/q$, with $p_i\leqslant q$, $p_1\neq 0$ and $p_n\neq q$, we have
\begin{eqnarray*}
F(\psi(z_1)\cdots\psi(z_n)) &=& F(F(\mathbf{b}^{p_1}\mathbf{a}^{q-p_1})\cdots F(\mathbf{b}^{p_n}\mathbf{a}^{q-p_n}))\\
&=& F(\mathbf{b}^{p_1}\mathbf{a}^{q-p_1}\cdots \mathbf{b}^{p_n}\mathbf{a}^{q-p_n})\qquad\mbox{(by Proposition~\ref{prop:SBA-MNE-TFAE4}(a))}\\
&=& F(\mathbf{b}^{\sum p_i}\mathbf{a}^{nq-\sum p_i})\qquad\mbox{(by Corollary~\ref{cor:s8da5})}\\
&=& \psi\bigg(\frac{1}{nq}\sum_{i=1}^n p_i\bigg) ~=~ \psi\bigg(\frac{1}{n}\,\sum_{i=1}^nz_i\bigg).
\end{eqnarray*}
This completes the proof of the lemma.
\end{proof}

\begin{theorem}\label{thm:9asf7}
Theorem~\ref{thm:KolNag30} still holds if we replace B-associativity and symmetry with strong B-associativity. Also, idempotence can be removed.
\end{theorem}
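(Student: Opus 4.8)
The plan is to prove the two implications separately, the content lying in the ``only if'' part, which I would carry out along Kolmogoroff's lines using Lemma~\ref{lemma:psi} and Proposition~\ref{prop:SBA-MNE-TFAE4}. For the ``if'' part, if $F$ is a quasi-arithmetic mean function, say $F_n(\bfx)=f^{-1}\big(\frac1n\sum_{i=1}^n f(x_i)\big)$ with $f\colon\mathbb{I}\to\R$ continuous and strictly increasing, then each $F_n$ is plainly continuous, idempotent, symmetric, and strictly increasing in each argument; being symmetric, its strong B-associativity coincides with ordinary B-associativity, and the latter is the classical Kolmogoroff-Nagumo verification that $f(F(\bfy))=\frac1{|\bfy|}\sum f$ recombines correctly. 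So I would only spell out the converse.

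Assume now $F$ strongly B-associative with every $F_n$ continuous and strictly increasing in each argument. First I would recover idempotence: since $F$ is B-associative it is arity-wise range-idempotent, whence $\delta_{F_n}\circ\delta_{F_n}=\delta_{F_n}$, and since $F_n$ is strictly increasing the diagonal $\delta_{F_n}$ is strictly increasing, hence one-to-one, forcing $\delta_{F_n}=\id_{\mathbb{I}}$. In particular $F_1=\id_{\mathbb{I}}$ and $\ran(F_n)=\mathbb{I}$ for every $n$, so $F$ trivially has a multiplicatively growing range. Proposition~\ref{prop:SBA-MNE-TFAE4} then makes available invariance by replication together with conclusions (a)--(c), and Corollary~\ref{cor:s8da5} gives symmetry of $F_n$ in its inner arguments.

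Next I would fix $a<b$ in $\mathbb{I}$ and form the function $\psi$ of Lemma~\ref{lemma:psi} with $\mathbf{a}=a$, $\mathbf{b}=b$, so that $\psi(0)=a$, $\psi(1)=b$ and $F(\psi(z_1)\cdots\psi(z_n))=\psi\big(\frac1n\sum_i z_i\big)$ for rational $z_i\in[0,1]$ with $z_1\neq0$, $z_n\neq1$. Strict monotonicity of $\psi$ is immediate from strict increasingness of $F$ (passing to a common denominator and replacing letters $a$ by $b$). The crucial analytic step---and what I expect to be the main obstacle---is to show that $\psi$ extends to a continuous strictly increasing bijection $\bar\psi\colon[0,1]\to[a,b]$. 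This is the generalization of \cite[Lemma~4.9]{GraMarMesPap09}: a jump of the monotone $\psi$ at some $t_0$ is excluded by choosing rationals $r_k\uparrow t_0$, $s_k\downarrow t_0$ whose average $q_k=\frac{r_k+s_k}2$ stays $>t_0$; then $F_2(\psi(r_k),\psi(s_k))=\psi(q_k)\geqslant\bar\psi(t_0^+)$, while continuity of $F_2$ and strict internality (a consequence of idempotence and strict monotonicity) force the left-hand side to tend to $F_2(\bar\psi(t_0^-),\bar\psi(t_0^+))<\bar\psi(t_0^+)$, a contradiction.

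With $\bar\psi$ continuous, writing $f_{a,b}=\bar\psi^{-1}$ and approximating arbitrary arguments $x_i=\bar\psi(t_i)\in[a,b]$ by interior rationals (using continuity of $F_n$ to dispose of the constraints $z_1\neq0$, $z_n\neq1$) upgrades the displayed identity to $F_n(\bfx)=f_{a,b}^{-1}\big(\frac1n\sum_i f_{a,b}(x_i)\big)$ on all of $[a,b]^n$; in particular this exhibits $F_n$ as symmetric there. Finally I would assemble a single generator on $\mathbb{I}$: by Proposition~\ref{prop:genfg} the local generators attached to nested intervals agree up to an affine transformation on overlaps, so normalizing them on a fixed base interval and exhausting $\mathbb{I}$ by an increasing sequence of compact intervals yields a continuous strictly increasing $f\colon\mathbb{I}\to\R$ with $F_n(\bfx)=f^{-1}\big(\frac1n\sum_i f(x_i)\big)$ for every $\bfx$, i.e.\ $F$ is a quasi-arithmetic mean function. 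The Nagumo variant follows by the same route: strict internality of $F_2$ together with B-associativity and continuity yields the strict increasingness used above, after which the argument is unchanged; idempotence, as shown, is never needed as a hypothesis.
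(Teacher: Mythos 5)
Your strategy coincides with the paper's: recover idempotence from injectivity of the diagonal, deduce a multiplicatively growing range, invoke Lemma~\ref{lemma:psi} to define $\psi(p/q)=F(\mathbf{b}^p\mathbf{a}^{q-p})$, extend $\psi$ continuously by ruling out jumps of this monotone function, invert to obtain a generator on compact subintervals, and patch the local generators over an exhaustion of $\mathbb{I}$ via Proposition~\ref{prop:genfg}. Your interior jump-exclusion argument (the internality version, averaging to the right of $t_0$) is a sound variant of the paper's (which averages to the left and contradicts strict increasingness). The sufficiency direction and the removal of idempotence are handled exactly as in the paper.

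There is, however, one concrete gap: you assert that $\psi$ extends to a continuous strictly increasing \emph{bijection} $\overline{\psi}\colon[0,1]\to[a,b]$, but the recipe you give (rationals $r_k\uparrow t_0$, $s_k\downarrow t_0$) only excludes jumps at interior points $t_0\in{]0,1[}$. At $t_0=0$ (and symmetrically at $t_0=1$) there are no rationals $r_k\uparrow 0$ inside $[0,1]$, and the natural substitute --- averaging with the endpoint value $\psi(0)$ itself --- collides with the constraints of Lemma~\ref{lemma:psi} that $z_1\neq 0$ and $z_n\neq 1$: the identity $F(\psi(0)\psi(s_k))=\psi(s_k/2)$ is not available in that order, only $F(\psi(s_k)\psi(0))=\psi(s_k/2)$ is. Surjectivity onto $[a,b]$, i.e.\ $\lim_{z\to 0^+}\psi(z)=a$ and $\lim_{z\to 1^-}\psi(z)=b$, is precisely what your final step needs in order to write arbitrary $x_i\in[a,b]$ as $\overline{\psi}(t_i)$, so it cannot be absorbed into the interior case. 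The paper gives this step its own argument: with $\alpha=\lim_{z\to 0^+}\psi(z)$ it computes $F(b,\alpha)=\lim_{z\to 0^+}F(\psi(1),\psi(z))=\lim_{z\to 0^+}\psi\bigl(\tfrac{1+z}{2}\bigr)=\psi\bigl(\tfrac12\bigr)=F(\psi(1),\psi(0))=F(b,a)$, and concludes $\alpha=a$ from injectivity of $F_2$ in its second argument. Your sketch can be repaired in the same spirit (e.g.\ $F(\psi(s_k)\psi(0))=\psi(s_k/2)\to\alpha$ while the left side tends to $F(\alpha,a)<F(\alpha,\alpha)=\alpha$ if $a<\alpha$), but as written the endpoint case --- and hence the bijectivity you rely on --- is missing.
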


\begin{proof}
(Necessity) Let $F\colon \mathbb{I}^*\to\mathbb{I}$ be a strongly B-associative function such that, for every $n\geqslant 1$, the function $F_n$ is continuous, idempotent, and strictly increasing in each argument.

We first assume that $\mathbb{I}$ is a closed interval $[a,b]$, with $b>a$. Since $F_n$ is idempotent for every $n\geqslant 1$, $F$ has a multiplicative growing range. By Lemma~\ref{lemma:psi} the function $\psi\colon [0,1]\cap\Q\to [a,b]$ defined by $\psi(p/q) = F(b^pa^{q-p})$ is well defined and such that for every $n\geqslant 1$ and every $\bfz\in [0,1]^n\cap\Q^n$ such that $z_1\neq 0$ and $z_n\neq 1$, we have
\begin{equation}\label{eq:s5fsf}
F(\psi(z_1)\cdots\psi(z_n)) ~=~ \psi\bigg(\frac{1}{n}\sum_{i=1}^nz_i\bigg).
\end{equation}
Moreover, it is easy to see that the function $\psi$ is strictly increasing.

Let us now show that the restriction of $\psi$ to $\left]0,1\right[\cap\Q$ can be extended to a continuous function $\overline{\psi}\colon\left]0,1\right[\to [a,b]$.

Let $x\in\left]0,1\right[$. Since $\psi$ is nondecreasing we can define
$$
u ~=~ \lim_{z\to x^-}\psi(z)\quad\mbox{and}\quad v ~=~ \lim_{z\to x^+}\psi(z).
$$
Let us show that $u=v$. For contradiction, assume that $u<v$ and consider two sequences $(z_m)_{m\geqslant 1}$ and $(z'_m)_{m\geqslant 1}$ in $[0,1]\cap\Q$ such that $z_m\to x^-$, $z'_m\to x^+$, and $(z_m+z'_m)/2<x$. Using \eqref{eq:s5fsf} and the continuity of $F$, we then have
$$
u ~=~ \lim_{m\to\infty}\psi\Big(\frac{z_m+z'_m}{2}\Big) ~=~ \lim_{m\to\infty} F(\psi(z_m),\psi(z'_m)) ~=~ F(u,v) ~<~ F(u,u) ~=~ u,
$$
a contradiction.

Let $\overline{\psi}\colon\left]0,1\right[\to\left]\alpha,\beta\right[$ be the continuous extension of $\psi$, where $\left]\alpha,\beta\right[=\ran(\overline{\psi})$. Let us show that $\alpha =a$. Due to the uniqueness of the limit, we have $\lim_{z\to 0^+}\psi(z)=\lim_{t\to 0^+}\overline{\psi}(t)=\alpha$. Then, using \eqref{eq:s5fsf} and the continuity of $F$, we have
\begin{eqnarray*}
F(b,\alpha) &=& \lim_{z\to 0^+}F(\psi(1),\psi(z)) ~=~ \lim_{z\to 0^+}\psi\Big(\frac{1+z}{2}\Big) ~=~ \psi\Big(\frac{1}{2}\Big) ~=~ F(\psi(1),\psi(0))\\
&=& F(b,a).
\end{eqnarray*}
Since $F_2$ is one-to-one in its second argument, we must have $\alpha =a$. We prove similarly that $\beta =b$. Thus, $\overline{\psi}$ can be further extended to a continuous and strictly increasing function from $[0,1]$ onto $[a,b]$. Denoting by $f\colon [a,b]\to [0,1]$ the inverse of this continuous extension, from \eqref{eq:s5fsf} and continuity we derive the identity
$$
F_n(\bfx) ~=~ f^{-1}\bigg(\frac{1}{n}\sum_{i=1}^n f(x_i)\bigg),\qquad\bfx\in [a,b]^n,~n\geqslant 1,
$$
which proves the result when $\mathbb{I}=[a,b]$.

Let us now prove the result for a general nontrivial interval $\mathbb{I}$. Here we use arguments sketched in \cite[Theorem~4.10]{GraMarMesPap09}. Let $M_f\colon\mathbb{I}^*\to\mathbb{I}$ denote the quasi-arithmetic mean function generated by $f\colon\mathbb{I}\to\R$. Let $a=\inf\mathbb{I}$ and $b=\sup\mathbb{I}$. Let also $(a_m)_{m\geqslant 1}$ (resp.\ $(b_m)_{m\geqslant 1}$) be a strictly decreasing (resp.\ strictly increasing) sequence in $\mathbb{I}$ converging to $a$ (resp.\ $b$). From the previous result it follows that there exist continuous and strictly increasing functions $f_m\colon [a_m,b_m]\to\R$ and $f_{m+1}\colon [a_{m+1},b_{m+1}]\to\R$ such that $F=M_{f_m}$ on $[a_m,b_m]^*$ and $F=M_{f_{m+1}}$ on $[a_{m+1},b_{m+1}]^*$. By Proposition~\ref{prop:genfg}, we have $M_{f_{m+1}}=M_{rf_{m+1}+s}$ for all $r,s\in\R$, with $r\neq 0$. It follows that $f_{m+1}$ can be chosen so that $f_{m+1}(a_m)=f_m(a_m)$ and $f_{m+1}(b_m)=f_m(b_m)$. Since $M_{f_{m+1}}=F=M_{f_m}$ on $[a_m,b_m]^*$ from Proposition~\ref{prop:genfg} it follows that there exist $c,d\in\R$, with $c\neq 0$ such that $f_m=cf_{m+1}+d$. Due to the definition of $f_{m+1}$, we must have $c=1$ and $d=0$, that is, $f_{m+1}=f_m$ on $[a_m,b_m]$.

Define $f\colon\mathbb{I}\to\R$ by
$$
f(x) ~=~
\begin{cases}
~\lim_{m\to\infty}f_m(x), & \mbox{if $x\in\left]a,b\right[$},\\
~\inf_{m\geqslant 1} f_m(a_m), & \mbox{if $x=a\in\mathbb{I}$},\\
~\sup_{m\geqslant 1} f_m(b_m), & \mbox{if $x=b\in\mathbb{I}$}.
\end{cases}
$$
It is clear that $f$ is continuous and strictly increasing. Moreover, we have $F=M_f$ on $\bigcup_m [a_m,b_m]^*$ and even on $\mathbb{I}^*$ by continuity.

(Sufficiency) Straightforward.
\end{proof}

In \cite{MarTeh3} the authors established a generalization of Kolmogoroff-Nagumo's characterization to quasi-arithmetic pre-mean functions. In the next two theorems we state this result and show that its assumptions can be weakened by replacing both B-preassociativity and symmetry with strong B-preassociativity.

\begin{theorem}[{\cite{MarTeh3}}]\label{thm:KolmExt}
Let $\mathbb{I}$ be a nontrivial real interval, possibly unbounded. A function $F\colon \mathbb{I}^*\to\R$ is B-preassociative and, for every $n\geqslant 1$, the function $F_n$ is symmetric, continuous, and strictly increasing in each argument if and only if $F$ is a quasi-arithmetic pre-mean function.
\end{theorem}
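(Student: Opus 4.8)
The plan is to treat the sufficiency as routine and concentrate on the necessity, where the idea is to \emph{idempotize} $F$ so as to reduce to the classical Kolmogoroff--Nagumo theorem (Theorem~\ref{thm:KolNag30}). For sufficiency, if $F_n(\bfx)=f_n(\frac1n\sum_{i=1}^nf(x_i))$ with $f$ and the $f_n$ continuous and strictly increasing, then each $F_n$ is plainly symmetric, continuous, and strictly increasing in each argument; moreover $F$ is B-preassociative because $F(\bfy)=F(\bfy')$ with $|\bfy|=|\bfy'|=m$ forces $\sum f(y_i)=\sum f(y_i')$ (as $f_m$ is one-to-one), and this equality of partial sums is exactly what is needed to pass from $F(\bfx\bfy\bfz)$ to $F(\bfx\bfy'\bfz)$.

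For necessity, assume $F$ is B-preassociative with each $F_n$ symmetric, continuous, and strictly increasing. First I would record that $\ran(F_n)\subseteq\ran(\delta_{F_n})$: given $\bfx\in\mathbb{I}^n$ with $m=\min_i x_i$ and $M=\max_i x_i$, monotonicity gives $\delta_{F_n}(m)\leqslant F_n(\bfx)\leqslant\delta_{F_n}(M)$, and since $\delta_{F_n}$ is continuous the intermediate value theorem places $F_n(\bfx)$ in its range. Because $\delta_{F_n}$ is continuous and strictly increasing, it is a homeomorphism onto its range, so I can define $G\colon\mathbb{I}^*\to\mathbb{I}$ by $G_n=\delta_{F_n}^{-1}\circ F_n$. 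By construction $\delta_{G_n}=\id_{\mathbb{I}}$, so $G$ is idempotent (hence arity-wise range-idempotent) with $\ran(G_n)\subseteq\mathbb{I}$, and it inherits symmetry, continuity, and strict monotonicity from $F_n$ since $\delta_{F_n}^{-1}$ is continuous and strictly increasing.

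Next I would check that $G$ is B-preassociative directly: if $|\bfy|=|\bfy'|$ and $G(\bfy)=G(\bfy')$, then applying $\delta_{F_{|\bfy|}}$ yields $F(\bfy)=F(\bfy')$, whence $F(\bfx\bfy\bfz)=F(\bfx\bfy'\bfz)$ by B-preassociativity of $F$, and applying $\delta_{F_{|\bfx\bfy\bfz|}}^{-1}$ gives $G(\bfx\bfy\bfz)=G(\bfx\bfy'\bfz)$. Being B-preassociative, arity-wise range-idempotent, and valued in $\mathbb{I}$, the operation $G$ is B-associative by the converse recalled just before Proposition~\ref{prop:sdf576}. Thus $G$ satisfies every hypothesis of Theorem~\ref{thm:KolNag30}, so it is a quasi-arithmetic mean function: there is a continuous strictly increasing $f\colon\mathbb{I}\to\R$ with $G_n(\bfx)=f^{-1}(\frac1n\sum_{i=1}^nf(x_i))$. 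Unwinding the definition of $G$ then gives $F_n=\delta_{F_n}\circ G_n=f_n(\frac1n\sum_{i=1}^nf(x_i))$ with $f_n:=\delta_{F_n}\circ f^{-1}$, a continuous strictly increasing function on $\ran(f)$; extending each $f_n$ continuously and monotonically to all of $\R$ (which never alters the values, since $\frac1n\sum f(x_i)$ always lies in the interval $\ran(f)$) exhibits $F$ as a quasi-arithmetic pre-mean function.

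The step I expect to be the main obstacle is verifying that the idempotization $G$ is genuinely B-associative rather than merely B-preassociative, i.e.\ confirming that all hypotheses of the classical theorem survive the left composition $G_n=\delta_{F_n}^{-1}\circ F_n$; the delicate points there are the internality estimate $\ran(F_n)\subseteq\ran(\delta_{F_n})$ that makes $G$ well defined and the passage from B-preassociativity plus range-idempotence to B-associativity, which requires the values of $G$ to stay inside $\mathbb{I}$. Everything after the application of Theorem~\ref{thm:KolNag30} is bookkeeping, save for the harmless extension of the outer generators $f_n$ from $\ran(f)$ to $\R$.
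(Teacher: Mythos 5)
Your strategy is exactly the one the paper itself uses: this theorem is quoted from \cite{MarTeh3}, and the paper's proof of its strong analogue (the theorem that follows it) is declared to be ``very similar'' and proceeds precisely as you do --- deduce arity-wise quasi-range-idempotence from continuity and monotonicity, idempotize by $G_n=\delta_{F_n}^{-1}\circ F_n$, check that the idempotized operation satisfies the classical Kolmogoroff--Nagumo hypotheses, and unwind. Your substitutions are sound: where the paper invokes the factorization result (Theorem~\ref{thm:FactoriAWRI-BPA237111}, resp.\ Corollary~\ref{cor:fsa7f}) to get B-associativity of the idempotized operation, you verify B-preassociativity of $G$ directly and use the converse recalled just before Proposition~\ref{prop:sdf576}; and your intermediate-value argument for $\ran(F_n)\subseteq\ran(\delta_{F_n})$ supplies a step the paper asserts without proof.

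The one step that genuinely fails is the last one, which you call harmless: extending $f_n=\delta_{F_n}\circ f^{-1}$ from $\ran(f)$ to a continuous, strictly increasing function on all of $\R$, as the paper's definition of a quasi-arithmetic pre-mean demands ($f_n\colon\R\to\R$). Your parenthetical shows that an extension, if it exists, does not alter the values of $F$; it does not show that an extension exists, and it may not. Take $\mathbb{I}=[0,1)$ and $F_n(\bfx)=\mu_n(\bfx)/(1-\mu_n(\bfx))$, where $\mu_n(\bfx)=\frac{1}{n}\sum_{i=1}^{n}x_i$: all hypotheses hold ($F_n$ is the left composition of an injective map with the arithmetic mean), the idempotization $G$ is the arithmetic mean, a Jensen-equation argument shows that any admissible inner generator $f$ must be affine, so $\ran(f)$ is a bounded half-open interval $[b,a+b)$ on which $f_n=\delta_{F_n}\circ f^{-1}$ tends to $+\infty$ at the right endpoint --- no continuous extension to $\R$ can exist. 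So the gap is real, and in fact unfixable under the literal definition: either the definition must be read with $f_n$ defined only on the convex hull of $\ran(f)$ (in which case your proof is complete and the extension step is unnecessary), or the quoted statement itself needs amending. This is arguably a defect inherited from the statement rather than from your strategy, but as written your proof asserts a step that can fail.
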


\begin{theorem}
Theorem~\ref{thm:KolmExt} still holds if we replace B-preassociativity and symmetry with strong B-preassociativity.
\end{theorem}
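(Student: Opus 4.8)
The plan is to prove the equivalence by reducing the strongly B-preassociative characterization to the B-preassociative one already stated in Theorem~\ref{thm:KolmExt}. The key observation is that strong B-preassociativity is strictly stronger than B-preassociativity (as established in the discussion following the Definition), and that under the symmetry assumption the two properties coincide. So the two directions of the equivalence require different strategies, and most of the work lies in the necessity direction (the ``only if'' part), where we assume strong B-preassociativity together with continuity and strict monotonicity, and must deduce that $F$ is a quasi-arithmetic pre-mean function.

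For the sufficiency direction (``if''), I would argue that every quasi-arithmetic pre-mean function is automatically symmetric, since the generator $f_n(\frac{1}{n}\sum_{i=1}^n f(x_i))$ depends on $x_1,\ldots,x_n$ only through the symmetric sum $\sum_i f(x_i)$. Combined with the fact that such a function is B-preassociative by the sufficiency part of Theorem~\ref{thm:KolmExt}, and that B-preassociativity together with symmetry implies strong B-preassociativity (as stated in the remarks after the Definition), we immediately get that $F$ is strongly B-preassociative, continuous, and strictly increasing in each argument. This direction should be routine.

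For the necessity direction, the natural route is to \emph{bootstrap} the hypotheses into the setting of the already-proven Theorem~\ref{thm:KolmExt}. The first step is to recover symmetry: I would invoke Corollary~\ref{cor:s8da5} together with Proposition~\ref{prop:fd87f}(a). Since each $F_n$ is strictly increasing and continuous, the low-arity parts $F_1$ and $F_2$ are symmetric (here $F_2(x,y)=F_2(y,x)$ must be extracted from strong B-preassociativity directly, analogously to Lemma~\ref{lemma:8sf6}, using the strict monotonicity to cancel), and Proposition~\ref{prop:fd87f}(a) then propagates symmetry to every $F_n$ with $n\geqslant 2$. Once symmetry of all $F_n$ is in hand, strong B-preassociativity implies ordinary B-preassociativity, so $F$ satisfies all the hypotheses of Theorem~\ref{thm:KolmExt}, which yields that $F$ is a quasi-arithmetic pre-mean function.

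The main obstacle I anticipate is the base case for symmetry: establishing $F_2(x,y)=F_2(y,x)$ from strong B-preassociativity alone. Unlike the strongly B-associative setting of Lemma~\ref{lemma:8sf6}, here we have no composition of $F$ with itself available, because the codomain is $\R$ rather than $\mathbb{I}\cup\{\varepsilon\}$, so the trick $F(abcd)=F(xxyy)=F(F(xy)^4)=\cdots$ is unavailable. Instead I would exploit strict monotonicity: using formulation (iii) of Proposition~\ref{prop:asf8ssa} with $|\bfy|=1$, one can show that whenever $F(\bfx\bfz)=F(\bfx'\bfz')$ the values $F(\bfx y\bfz)$ and $F(\bfx' y\bfz')$ agree, and continuity plus strict increase lets one invert the generator to force the symmetric behavior at arity $2$ and thereby seed Proposition~\ref{prop:fd87f}(a). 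If this base step turns out to be delicate, an alternative is to bypass symmetry entirely and adapt the proof of Theorem~\ref{thm:KolmExt} directly, constructing the generator $f$ via a Lemma~\ref{lemma:psi}-style argument built on Proposition~\ref{prop:asf8ssa}(iii); but the symmetry-recovery route is cleaner and reuses the machinery already in place.
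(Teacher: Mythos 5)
Your sufficiency direction is fine, and your observation that recovering symmetry would reduce the necessity direction to Theorem~\ref{thm:KolmExt} (via Proposition~\ref{prop:fd87f}(a), once $F_2$ is symmetric) is also correct. But the crux of your plan --- proving $F_2(x,y)=F_2(y,x)$ from strong B-preassociativity, continuity and strict monotonicity --- is a genuine gap, and the sketch you give for it does not work as stated. It is circular: ``continuity plus strict increase lets one invert the generator'' presupposes a generator $f$, whose existence is precisely the conclusion of the theorem; at that stage of your argument no generator exists. Moreover, the tools you lean on do not apply in this setting: Lemma~\ref{lemma:8sf6} and Corollary~\ref{cor:s8da5} concern strongly B-associative \emph{operations} $F\colon X^*\to X\cup\{\varepsilon\}$, where values of $F$ can be fed back in as arguments; here $F\colon\mathbb{I}^*\to\R$, so no such composition is available (you concede this for Lemma~\ref{lemma:8sf6}, yet still invoke Corollary~\ref{cor:s8da5}). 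Symmetry \emph{can} be extracted directly --- strong B-preassociativity gives factorizations $F_3(x,y,z)=G(F_2(x,z),y)=H_1(x,F_2(y,z))=H_2(F_2(x,y),z)$, and comparing these on the diagonals $y=x$ and $y=z$ yields transposition identities like $F_3(x,y,z)=F_3(x,z,y)$ under suitable range conditions, whence symmetry of $F_2$ follows by strict monotonicity of $G$ in its first argument --- but this is a nontrivial functional-equation argument with real domain/range issues to settle, and none of it appears in your proposal.

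The paper's proof avoids the symmetry question entirely, and this is the idea you are missing: continuity and strict monotonicity of each $F_n$ give arity-wise quasi-range-idempotence, so one may define the $\varepsilon$-standard operation $H$ by $H_n=\delta_{F_n}^{-1}\circ F_n$. By Corollary~\ref{cor:fsa7f} (no choice needed, since $\delta_{F_n}^{-1}$ is a true inverse), $H$ is strongly B-associative, and each $H_n$ is continuous, idempotent and strictly increasing; Theorem~\ref{thm:9asf7} --- which has already eliminated the symmetry hypothesis --- then makes $H$ a quasi-arithmetic mean function, and $F_n=\delta_{F_n}\circ H_n$ is a quasi-arithmetic pre-mean. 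Your route declines to use Theorem~\ref{thm:9asf7} and must therefore recreate its hardest content (structure recovery without symmetry) by hand; that is exactly the part your proposal does not supply.
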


\begin{proof}
We note that the proof is very similar to that of Theorem~\ref{thm:KolmExt} (see \cite{MarTeh3}).

(Necessity) Since $F_n$ is increasing and continuous for every $n\geqslant 1$, it follows that $F$ is arity-wise quasi-range-idempotent. Let $H\colon \mathbb{I}^*\to \mathbb{I}\cup\{\varepsilon\}$ be the $\varepsilon$-standard operation defined by $H_n=\delta_{F_n}^{-1}\circ F_n$ for every $n\geqslant 1$. It is clear that every $H_n$ is continuous, idempotent, and strictly increasing. By Corollary~\ref{cor:fsa7f} (here AC is not needed since $\delta_{F_n}^{-1}$ is an inverse), $H$ is strongly B-associative (and remains so if we modify the value of $H(\varepsilon)$ into any element of $\mathbb{I}$; see Remark~\ref{rem:a8d5}). By Theorem~\ref{thm:9asf7} it follows that $H$ is a quasi-arithmetic mean function. This completes the proof.

(Sufficiency) Straightforward.
\end{proof}

\section{Concluding remarks and open problems}

We have investigated the strong B-associativity property for variadic operations and introduced a relaxation of this property, namely strong B-preassociativity. In particular, we have presented a characterization of the class of strongly B-preassociative functions in terms of associative string functions.

We end this paper with the following questions:
\begin{enumerate}
\item[(a)] Find necessary and sufficient conditions on a (strongly) B-preassociative function $F\colon X^*\to Y$ for condition \eqref{eq:75wtbd} to hold.

\item[(b)] Find necessary and sufficient conditions on a B-associative operation $F\colon X^*\to X\cup\{\varepsilon\}$ satisfying $F(xyz)=F(F(xz)yF(xz))$ for every $xyz\in X^3$ to be strongly B-associative. What if $F$ satisfies the symmetry condition stated in Corollary~\ref{cor:s8da5}?

\item[(c)] Similarly, find necessary and sufficient conditions on a B-preassociative function $F\colon X^*\to Y$ satisfying the condition
    $$
    F(xz)=F(x'z')\quad\Rightarrow\quad F(xyz)=F(x'yz'),\qquad xx'yzz'\in X^5
    $$
    to be strongly B-preassociative.

\item[(d)] Find a characterization of the class of those string functions which are associative, length-preserving, and strongly B-preassociative (cf.\ Corollary~\ref{cor:fdfas}).
\end{enumerate}

\section*{Acknowledgments}

This research is supported by the internal research project F1R-MTH-PUL-15MRO3 of the University of Luxembourg.


\end{document}